\newcommand{\Q}{\ensuremath{\mathbb{Q}}}
\newcommand{\Z}{\ensuremath{\mathbb{Z}}}
\newcommand{\N}{\ensuremath{\mathbb{N}}}
\newcommand{\sinf}{<}
\newcommand{\ssup}{>}
\newcommand{\Val}{\text{Val}}
\DeclareRobustCommand{\qbinom}{\genfrac[]{0pt}{}}
\theoremstyle{plain}
\newtheorem{thm}{Theorem}
\newtheorem{prop}[thm]{Proposition}
\newtheorem{lemma}[thm]{Lemma}
\newtheorem{cor}[thm]{Corollary}
\newtheorem{res}{Result}
\theoremstyle{definition}
\newtheorem{defn}[thm]{Definition}
\newtheorem*{note}{Nota bene}
\theoremstyle{remark}
\newtheorem{rem}[thm]{Remark}
\newtheorem{ex}[thm]{Example}
\newcommand{\address}{{
  \bigskip
  \footnotesize

  \textsc{Institut de Mathématiques de Toulouse, UMR5219,}\par\nopagebreak
  \textsc{UPS, F-31062 Toulouse Cedex 9, France}\par\nopagebreak
  \textit{E-mail address}: \texttt{sonny.willetts@math.univ-toulouse.fr}

}}
\begin{document}
\title{\textbf{\textsc{Vassiliev invariants for knots from the ADO polynomials}}}
\author{\textsc{Sonny Willetts}}
\date{}
\maketitle

%%%%%% DEFINIR ADO
%%%%%% DEFINIR JONES
%%%%%% PLUTOT JUSTE ORIENTED QUE 0-FRAMED?

\begin{abstract}
In this paper we prove that the $r$-th ADO polynomial of a knot, for $r$ a power of prime number, can be expanded as Vassiliev invariants with values in $\Z$. Nevertheless this expansion is not unique and not easily computable. We can obtain a unique computable expansion, but we only get $r$ adic topological Vassiliev invariants as coefficients.  To do so, we exploit the fact that the colored Jones polynomials can be decomposed as Vassiliev invariants and we tranpose it to ADO using the unified knot invariant recovering both ADO and colored Jones defined in \cite{Willetts2020AUO}. Finally we prove some asymptotic behavior of the ADO polynomials modulo $r$ as $r$ goes to infinity.
\end{abstract}

\section*{Introduction}
Quantum knots invariants at formal $q$ can be decomposed $h$-adically as Vassiliev invariants by setting $q=e^h$. This is the case for the colored Jones polynomials for knots. Nevertheless, for the invariants coming from quantum algebra at roots of unity, it is unclear if they can be decomposed into Vassiliev invariants. As invariants coming from a version of quantum $\mathfrak{sl}_2$ at roots of unity, the ADO polynomials for knots don't have a $h$-adic decomposition from which coefficients would be Vassiliev invariants.

The strategy used in this paper to make Vassiliev invariants appear from ADO polynomials is to use the unified knot invariant defined in \cite{Willetts2020AUO}. This two variables invariant specializes to the colored Jones in some variable and into the ADO polynomials divided by the Alexader polynomial in the other variable. 

Since the colored Jones polynomials can be decomposed $h$-adically into Vassiliev invariants, we use this fact to obtain the following result for the unified invariant $F_{\infty}(q,q^{\alpha},\mathcal{K})$:

\begin{res}
Let $\mathcal{K}$ be a $0$ framed knot in $S^3$.\\
We can write in a unique way the unified invariant \[F_{\infty}(q,q^{\alpha},\mathcal{K})= \sum_{m,n \geq 0} b_{n,m}(\mathcal{K}) (q^2 -1 )^n (q^{2\alpha}-1)^m\]
And we have that, for any $m,n \in \N$, the map
\begin{align*}
 \nu_{n,m} : \mathsf{K} &\to \Z\\
 \mathcal{K} &\mapsto b_{n,m}(\mathcal{K})
 \end{align*}
 
 is a Vassiliev invariant of degree\textsuperscript{\hyperlink{NB_degree}{1}} $n+m$.\\
 (See Proposition \ref{Vassiliev_coeff_unified} for a more detailed version.)
\end{res}

The main issue to transpose this result to ADO is that we pass through a completion ring, and we must be careful as to how we evaluate at roots of unity. The evaluation at roots of unity will lie in a completion ring that will not be trivial if $r$ is a power of a prime number.

As a result we can study an expansion of the $r$-th ADO polynomial whose coefficients will be Vassiliev invariants:

\begin{res}\label{result_ADO_vassiliev}
Let $\mathcal{K}$ be a $0$ framed knot in $S^3$ and $r=p^l$ be a power of a prime number.\\
There exists $c_{n,m}(r,\mathcal{K}) \in \Z$ such that we can write the ADO polynomial as \[ADO_r(q^{\alpha},\mathcal{K})= \sum_{m,n \geq 0} c_{n,m}(r,\mathcal{K}) (\zeta_r -1 )^n (q^{2\alpha}-1)^m\]
Such that, for any $m,n \in \N$, the map
\begin{align*}
 \nu_{n,m} : \mathsf{K} &\to \Z\\
 \mathcal{K} &\mapsto c_{n,m}(r,\mathcal{K})
 \end{align*}
 
 is a Vassiliev invariant of degree\textsuperscript{\hyperlink{NB_degree}{1}} $n+m$.\\
 (See Proposition \ref{thm_ado_Vassiliev} for a more detailed version.)
\end{res}

\begin{rem}
The sum in Result \ref{result_ADO_vassiliev}, the sum is finite for each $\mathcal{K}$ but there is a priori no global bound.
\end{rem}

But this expansion is not unique and cannot be determined easily having only the ADO polynomial in hand. In order to have a unique computable expansion, we will no longer have Vassiliev invariant coefficients but $r$ adic topological Vassiliev invariant coefficients.

\begin{res}
Let $\mathcal{K}$ be a $0$ framed knot in $S^3$ and $r=p^l$ be a power of a prime number.

We can write in a unique way \[ADO_r(q^{\alpha}, \mathcal{K}) = \sum_{m \geq 0} \sum_{n=0}^{\varphi(r)-1} d_{n,m}(r,\mathcal{K}) (\zeta_r-1)^n (q^{2\alpha}-1)^m  \] in $\Z[\zeta_r, q^{\alpha}]$,
and we have  
\begin{enumerate}
\item $d_{n,m}(r,\mathcal{K}) \mod r^j$ is a Vassiliev invariant of degree\textsuperscript{\hyperlink{NB_degree}{1}} $(j+1)\varphi(r)+m$.
\item In consequence, $d_{n,m}(r,\mathcal{K}) \in \Z$ and $d_{n,m}(r,\mathcal{K})$ is a $r$-adic topological Vassiliev invariant.
\item Thus, $ADO_r(q^{\alpha}, \mathcal{K})\in \Z[\zeta_r, q^{\alpha}]$ is a topological Vassiliev invariant for the filtration $(r^j (q^{\alpha}-1)^m)_{j,m \in \N}$ topology.
\end{enumerate}

(See Theorem \ref{thm_ado_Vassiliev_radic} for a more detailed version)
\end{res}

Then, we can study these coefficients modulo $r$ in order to get some asymptotic result for the ADO polynomials.

\begin{res}
Let $r=p^l$ be a power of a prime number and $\mathcal{K}$ be a $0$ framed knot in $S^3$.\\
We can write in a unique way \[ADO_r(q^{\alpha}, \mathcal{K})= \sum_{m \geq 0} \sum_{n=0}^{\varphi(r)-1} d_{n,m}(r,\mathcal{K}) (\zeta_{r} -1 )^n (q^{2\alpha}-1)^m\]
And we have that, for any $m \in \N$ and any $1 \leq n \leq \varphi(r)-1$, the map
\begin{align*}
 \nu_{n,m} : \mathsf{K} &\to \Z/r\Z\\
 \mathcal{K} &\mapsto \overline{d_{n,m}(r,\mathcal{K})}
 \end{align*}
 
 is a Vassiliev invariant of degree\textsuperscript{\hyperlink{NB_degree}{1}} $n+m$.\\
 Moreover, \[ \forall n \in \N \text{, } \forall m \sinf r\text{, }  \overline{d_{n,m}(r,\mathcal{K})}= \overline{b_{n,m}(\mathcal{K})} \text{ in  }\Z/rZ \]
 And hence, \[\Val_{q^{\alpha}-1}(\overline{F_{\infty}(\zeta_r,q^{\alpha},\mathcal{K})- ADO_r(q^{\alpha}, \mathcal{K})}) \underset{r \to +\infty}{\longrightarrow} +\infty \] where \[\Val_{q^{\alpha}-1}(\sum_m a_m (q^{\alpha}-1)^m)=\min(m \in \N | \  a_m \neq 0)\] is the valuation map.
 
 (See Proposition \ref{thm_ADO_Vassiliev_r} and \ref{thm_asymptotic_ado} for more detailed versions.)
\end{res}

\begin{note}
Please bear in mind that, in this article, the results are only available for $0$ framed knots in $S^3$.
\end{note}

%In the last section, we compute some examples in order to illustrate both the Vassiliev invariance and the fact that the $\overline{d_{n,m}(r,\mathcal{K})}$ are not all trivial.

\section{The unified invariant}
In this section, we will briefly present the construction of the unified invariant developped in \cite{Willetts2020AUO} where the proofs and more details are given.

\paragraph{Completion ring:}~\\
Here we describe the ring in which the unified knot invariant lies.

Let $R=\Z[q^{\pm 1}, A^{\pm 1}]$, we construct a completion of that ring. For the sake of simplicity, we will denote $q^{\alpha} := A$ and use previous notation for quantum numbers. Keep in mind that, here, $\alpha$ is just a notation, not a complex number.

\noindent We denote $ \{ \alpha \}_q = q^{\alpha}-q^{-\alpha}$, $\{ \alpha +k \}_q = q^{\alpha }q^k - q^{-\alpha } q^{-k}$, $\{ \alpha;n \}_q= \prod_{i=0}^{n-1} \{ \alpha -i\}_q$, $\{n\}_q!=\{n;n\}_q $, $\qbinom{\alpha+i}{n}=\frac{\{\alpha+i;n\}_q}{\{n\}!}$.

\begin{defn}
Let $I_n$ be the ideal of $R$ generated by the following set $\{ \ \{ \alpha+l; n \}_q , \ l \in \Z \}$.
\end{defn}

We then have a projective system : \[ \hat{I} : I_1 \supset I_2 \supset \dots \supset I_n \supset \dots \]
From which we can define the completion of $R$, taking the projective limit:

\begin{defn}
Let $\hat{R} = \underset{\underset{n}{\leftarrow}}{\lim} \dfrac{R}{I_n} = \{ (a_n)_{n \in \N^*} \in \prod_{i=1}^{\infty} \frac{R}{I_n} \ | \ \rho_n(a_{n+1})=a_n \}$  where $\rho_n: \frac{R}{I_{n+1}} \to \frac{R}{I_{n}}$ is the projection map.
\end{defn}

We denote $\widehat{\Z[q,q^{\alpha}]}=\hat{R}$.

\paragraph{Completion algebra and universal invariant:}
\begin{defn}
We set $U_h := U_h(\mathfrak{sl}_2)$ the $\Q[[h]]$ algebra topologically generated by $H,E,F$ and relations \[ [H,E]=2E, \ [H,F]=-2F, \ [E,F]=\frac{K-K^{-1}}{q-q^{-1}} \]
where $q=e^h$ and $K=q^H= e^{hH}$.
\end{defn}

It is endowed with an Hopf algebra structure:
$$
\begin{array}{lll}
\Delta(E)=1 \otimes E + E \otimes K & \epsilon(E)=0 &S(E)=-EK^{-1} \\
\Delta(F)=K^{-1} \otimes F + F \otimes 1 & \epsilon(F)=0 &S(E)=-KF \\
\Delta(H)=1 \otimes H + H \otimes 1 & \epsilon(H)=0 &S(H)=-H \\
\end{array}
$$

And an $R$-matrix:

\[R= q^{\frac{H \otimes H}{2}}\underset{i=0}{\overset{\infty}{\sum}} \frac{\{1\}^{2n} q^{\frac{n(n-1)}{2}}}{\{n\}_q!} E^n \otimes F^{n}\]

Altogether with a ribbon element: $K^{-1} u$ where $u=\sum S(\beta) \alpha$ if $R= \sum \alpha \otimes \beta$.

\bigskip
\noindent Hence, if $\mathcal{K}$ is a knot, we set $Q^{U_h}(\mathcal{K}) \in U_h$ the universal invariant associated to $\mathcal{K}$ in $U_h$. The definition of this element is given in Ohtsuki's book \cite{ohtsuki2002quantum} subsection 4.2. It is a knot invariant.
\bigskip

\begin{defn}
Let $\mathcal{U}:=U_q^{D}(\mathfrak{sl}_2)$ be the $\Z[q^{\pm 1}]$ subalgebra of $U_h$ generated by $E,\  F^{(n)},\ K$ where $F^{(n)}= \frac{\{1\}^{2n} F^n }{\{n\}_q!}$.
\end{defn}

It inherits the Hopf algebra structure from $U_h$.

\medskip
\noindent We denote $\{H+m \}_q=Kq^m-K^{-m}q^{-m}$, $\{H+m;n \}_q= \prod_{i=0}^{n-1} \{H+m-i\}_q$.

\begin{defn}
Let $L_n$ be the $\Z[q^{\pm 1} ] $ ideal generated by $\{ n\}!$.\\
Let $J_n$ be the $\mathcal{U}$ two sided ideal generated by the following elements:
\[ F^{(i+k)} \{H+m;n-i \}_q \] where $m \in \Z$ and $i \in \{0, \dots, n \}$ and $k \in \N$.
\end{defn}

We define the completion $\mathcal{\hat{U}}:= \underset{\underset{n}{\leftarrow}}{\lim} \dfrac{\mathcal{U}}{J_n}$ which is a $\widehat{\Z[q]}:= \underset{\underset{n}{\leftarrow}}{\lim} \dfrac{\Z[q]}{L_n}$ Hopf algebra (see \cite{Willetts2020AUO}~\cite{habiro2007integral}). 

There is a map $i: \mathcal{\hat{U}} \to U_h$.
Since we do not know if this map is injective, we consider $\tilde{\mathcal{U}}:= i(\mathcal{\hat{U}})$ the image in $U_h$. It is also an Hopf algebra.

We then have the following proposition:
\begin{prop}
If $\mathcal{K}$ is a $0$ framed knot, the universal invariant in $U_h$ lies in $ \mathcal{\tilde{U}}$, and we define $Q^{\mathcal{\tilde{U}}} (\mathcal{K})$ to be such that:
\[ Q^{U_h}(\mathcal{K})= Q^{\mathcal{\tilde{U}}} (\mathcal{K}) \in  \mathcal{\tilde{U}} \]
\end{prop}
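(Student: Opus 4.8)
The plan is to follow Habiro's integrality strategy, tracking how the $R$-matrix expansion organizes itself against the completion ideals $J_n$. First I would present $\mathcal{K}$ as a long knot (a $(1,1)$-tangle whose closure is $\mathcal{K}$) and recall from \cite{ohtsuki2002quantum} that $Q^{U_h}(\mathcal{K})$ is obtained by assigning a copy of $R$ (resp.\ $R^{-1}$) to each positive (resp.\ negative) crossing, the ribbon element $K^{-1}u$ to the framing, and contracting all tensor legs along the strand. Rewriting the $R$-matrix using $F^{(n)}=\frac{\{1\}^{2n}F^n}{\{n\}_q!}$ as
\[ R = q^{\frac{H \otimes H}{2}} \sum_{n=0}^{\infty} q^{\frac{n(n-1)}{2}} E^n \otimes F^{(n)}, \]
one sees that, apart from the diagonal factor $q^{\frac{H \otimes H}{2}}$, every tensor leg is a word in the generators $E, F^{(n)}, K$ of $\mathcal{U}$. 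Expanding the products over the crossing multi-index then yields $Q^{U_h}(\mathcal{K}) = \sum_{N \geq 0} x_N$ with $x_N \in \mathcal{U}$ the contribution of total degree $N$, once the diagonal factors have been resolved.

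The role of the $0$-framing enters precisely here. Upon contraction, the factors $q^{\frac{H \otimes H}{2}}$ split into cross-strand terms, which produce honest powers of $K = q^H$ (hence stay in $\mathcal{U}$), and self-strand terms, which would produce the problematic $q^{H^2/2}$-type factor lying in $U_h \setminus \tilde{\mathcal{U}}$. The total such self-twist contribution is governed by the writhe, so that for a $0$-framed diagram (after correcting by the ribbon element) it cancels; I would verify this cancellation, leaving only $K$-powers and the $E^n \otimes F^{(n)}$ parts, all of which lie in $\mathcal{U}$. In addition, since $\mathcal{K}$ is a single closed component, $Q^{U_h}(\mathcal{K})$ is weight-$0$, i.e.\ it commutes with $K$; writing each $x_N$ in PBW form this forces the number of $E$'s to equal the number of $F$'s, so that every monomial is of the shape $F^{(a)} P(K) E^a$ with $P$ a Laurent polynomial in $K$.

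The heart of the argument, and the step I expect to be the main obstacle, is to show that $\sum_N x_N$ converges in the $J_n$-adic topology, i.e.\ that $x_N \in J_{n(N)}$ with $n(N) \to \infty$. Starting from a term $F^{(a)} P(K) E^a$ with $a$ large, I would commute the $E^a$ to the left using the relation $[E,F] = \frac{K - K^{-1}}{q - q^{-1}}$ and its divided-power consequences in $\mathcal{U}$; each commutation step replaces an $E$--$F^{(n)}$ pair by a lower divided power multiplied by a factor $\{H + m\}_q = Kq^m - K^{-1}q^{-m}$, and assembling these reproduces exactly the generators $F^{(i+k)}\{H + m; n-i\}_q$ of $J_n$. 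This is the Habiro-type computation that identifies the tail of the expansion with the ideal filtration; the $0$-framing and the weight-$0$ balancing are what make the $K$-polynomial factors group into the prescribed products $\{H+m; n-i\}_q$ rather than leaving an unmatched remainder. Once the Cauchy estimate $x_N \in J_{n(N)}$ is established, the coherent family $\bigl(\sum_N x_N \bmod J_n\bigr)_n$ defines an element $\hat{x} \in \hat{\mathcal{U}} = \varprojlim \mathcal{U}/J_n$, and since the projections are compatible with $i$ one obtains $i(\hat{x}) = Q^{U_h}(\mathcal{K})$ in $U_h$. Setting $Q^{\tilde{\mathcal{U}}}(\mathcal{K}) := i(\hat{x}) \in \tilde{\mathcal{U}}$ then gives the claimed equality.
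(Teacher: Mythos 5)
The paper does not actually prove this proposition: it is imported wholesale from subsection 4.1 of \cite{Willetts2020AUO}, which in turn rests on Habiro's integrality machinery from \cite{habiro2007integral}. Your outline follows the same strategy as that source --- divided-power expansion of the $R$-matrix, resolution of the diagonal factor, and convergence with respect to the $J_n$-filtration --- so the route is the right one. But as written it is a plan rather than a proof: the two load-bearing steps are announced (``I would verify this cancellation'', ``the step I expect to be the main obstacle'') rather than carried out, and it is exactly these steps that the cited reference spends its effort on.

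Two more specific remarks. First, you have located the difficulty in the wrong place. The Cauchy estimate you present as the heart of the argument is nearly free: taking $i=n$ and $k\geq 0$ in the generating set of $J_n$ gives $F^{(n+k)}=F^{(n+k)}\{H+m;0\}_q\in J_n$, so any product containing a divided power $F^{(a)}$ with $a\geq n$ already lies in the two-sided ideal $J_n$. Since the total-degree-$N$ stratum of the state sum over a diagram with $c$ crossings contains, in each summand, some $F^{(n_i)}$ with $n_i\geq N/c$ by pigeonhole, the tail lies in $J_{\lceil N/c\rceil}$ with no commutation gymnastics needed. (Your computation $EF^{(n)}=F^{(n)}E+F^{(n-1)}\{H-n+1\}_q$ is correct and does reproduce generators of $J_n$; it is just not where the work is.) Second, the step that genuinely needs proof is the one you defer: that each stratum is an honest element of $\mathcal{U}$, i.e.\ that after sliding the two legs of each $q^{\frac{H\otimes H}{2}}$ factor together along the strand (each move past a weight-$\mu$ element producing $K^{\mu/2}$, an integral power of $K$ because all weights occurring are even), the residual factor $q^{\frac{wH^2}{2}}$ has exponent $w=0$ precisely because the knot is $0$-framed, and that no denominators or fractional powers of $q$ survive the contraction. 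That is where the hypothesis of the proposition is actually used and where a referee would demand details; for these you should reproduce the argument of Section 4.1 of \cite{Willetts2020AUO} rather than assert it.
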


See subsection 4.1 of \cite{Willetts2020AUO} for more details.

\paragraph{The topological Verma type module:}~\\ %%%%%%% A REPRENDRE AVEC LE BON VERMA   %%%%%%%%%%%%%%%%%%%%%
These completions allows us to construct a topological Verma type module on $\mathcal{\tilde{U}}$  whose coefficient ring will be $\widehat{\Z[q,q^{\alpha}]}$.

Let $V^{\alpha}$ be a $\Z[q,q^{\alpha}]$-module freely generated by vectors $\{ v_0, v_1, \dots\} $, and we endow it with a action of $\mathcal{U}$:
\[ Ev_0 = 0, \ \ Ev_{i+1} = v_i, \ \ Kv_i=q^{\alpha -2i} v_i, \ \ F^{(n)} v_i=\qbinom{n+i}{i}_q \{ \alpha -i ; n \}_q v_{n+i}\]

We define the \textit{topological Verma type module} as the $\widehat{\Z[q,q^{\alpha}]}$-module $\widehat{V^{\alpha}} =  \underset{\underset{n}{\leftarrow}}{\lim} \dfrac{V^{\alpha}}{I_n V^{\alpha}}$ where $I_n V^{\alpha}$ is the ideal generated by elements of the form $\lambda\times v$, $\lambda \in I_n$, $v \in V^{\alpha}$.
Since $J_n V^{\alpha} \subset I_n V^{\alpha}$, we can naturally endow it with a $\mathcal{\tilde{U}}$ module structure.

\medskip
The universal invariant of a knot belongs to the center of the algebra, and acts as a scalar on the Verma type module.
\begin{defn}
If $\mathcal{K}$ is a $0$ framed knot, we define the unified invariant $F_{\infty}(q,q^{\alpha}, \mathcal{K}) \in \widehat{\Z[q,q^{\alpha}]}$ as the coefficient of the scalar action of the universal invariant on the topological Verma type module $\widehat{V^{\alpha}}$:
\[Q^{U_h}(\mathcal{K}) v_0 =Q^{\mathcal{\tilde{U}}} (\mathcal{K}) v_0= F_{\infty}(q,q^{\alpha}, \mathcal{K}) v_0.\]
\end{defn}

\paragraph{The unified invariant:}~\\
Now that we have defined the universal invariant, let us see how it connects to the colored Jones polynomials and the ADO polynomials.

In \cite{Willetts2020AUO}, we proved the following theorem:
\begin{thm}\label{thm_unified_ado_jones}
Let $\mathcal{K}$ be a $0$ framed knot.
\[F_{\infty}(q,q^N,  \mathcal{K})= J_N(q^2, \mathcal{K})\] where $J_N$ is the $N$ colored Jones polynomial associated to the $N+1$ dimensional simple $U_q( \mathfrak{sl}_2)$-module normalised by $J_N(q,unknot)=1$.

\[ F_{\infty} (\zeta_{2r}, q^{\alpha}, \mathcal{K}) =  \frac{ADO_r(q^{\alpha},\mathcal{K})}{A_{\mathcal{K}} (q^{2r\alpha})} \]

where $ADO_r$ is the $r$-th ADO polynomial associated to quantum $\mathfrak{sl}_2$ at root of unity $q=\zeta_{2r}$  normalised by $ADO_r(q^{\alpha},unknot)=1$ and $A_{\mathcal{K}}$ is the Alexander polynomial  with normalisation $A_{unknot}(t)=1$ and $A_{\mathcal{K}}(1)=1$.

\end{thm}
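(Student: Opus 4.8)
The plan is to use the defining property of the unified invariant: since $Q^{U_h}(\mathcal{K})$ is central, it acts by a scalar on any highest weight module, and $F_{\infty}(q,q^{\alpha},\mathcal{K})$ is by definition that scalar on the highest weight vector $v_0$ of $\widehat{V^{\alpha}}$. Both the colored Jones polynomial and the ADO polynomial are, through the Reshetikhin--Turaev formalism, scalar actions of the \emph{same} universal invariant on particular $U_q(\mathfrak{sl}_2)$-modules. Each identity should therefore reduce to two things: identifying the relevant module as arising from $V^{\alpha}$ by specializing the formal parameters, and checking that the specialization is compatible with the projective limit defining $\widehat{\Z[q,q^{\alpha}]}$, so that it commutes with reading off the eigenvalue on $v_0$.

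For the colored Jones identity I would set $q^{\alpha}=q^N$ with $N\in\N$. Inspecting the action $F^{(n)}v_i=\qbinom{n+i}{i}_q\{\alpha-i;n\}_q v_{n+i}$, the factor $\{\alpha-i;n\}_q=\prod_{j=0}^{n-1}\{\alpha-i-j\}_q$ acquires the vanishing term $\{0\}_q$ precisely when $0\le N-i\le n-1$, i.e. whenever $i\le N$ and $n+i\ge N+1$. Hence $F^{(n)}v_i=0$ as soon as the index $n+i$ would exceed $N$, so that $\mathrm{span}\{v_0,\dots,v_N\}$ is a $\mathcal{U}$-submodule, isomorphic to the $(N+1)$-dimensional simple module with highest weight vector $v_0$. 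As $Q^{U_h}(\mathcal{K})$ is central it acts by a single scalar on all of $V^{\alpha}$, hence by the same scalar on this simple submodule; Schur's lemma identifies that scalar with the normalized colored Jones polynomial. Matching the convention $q=e^h$ of the $R$-matrix with the standard colored Jones variable accounts for the substitution $q\mapsto q^2$, yielding $F_{\infty}(q,q^N,\mathcal{K})=J_N(q^2,\mathcal{K})$; the one genuine point to verify is that the evaluation $q^{\alpha}=q^N$ descends through the tower $R/I_n$.

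For the ADO identity I would specialize $q=\zeta_{2r}$, where $\{r\}_q=q^r-q^{-r}=0$. This vanishing collapses the relevant quantum factors, and the $r$-dimensional nilpotent representation of $U_{\zeta_{2r}}(\mathfrak{sl}_2)$ underlying the definition of $ADO_r$ emerges from the specialized module. The delicate point is that the completion-valued eigenvalue $F_{\infty}(\zeta_{2r},q^{\alpha},\mathcal{K})$ does not equal the honestly normalized $ADO_r$ but differs from it by the factor $A_{\mathcal{K}}(q^{2r\alpha})$; concretely one must prove the factorization
\[ ADO_r(q^{\alpha},\mathcal{K})=F_{\infty}(\zeta_{2r},q^{\alpha},\mathcal{K})\cdot A_{\mathcal{K}}(q^{2r\alpha}). \]
I expect to obtain this by comparing the computation on the full infinite-rank module $\widehat{V^{\alpha}}$ --- whose eigenvalue is a genuine power series in $q^{2\alpha}-1$ --- with the finite nilpotent computation defining $ADO_r$: the contribution of the infinitely many weight spaces beyond the nilpotent range should resum, in the spirit of Melvin--Morton--Rozansky, into the inverse Alexander polynomial, evaluated at the only surviving variable $t=q^{2r\alpha}=(q^{\alpha})^{2r}$ since $q^{2r}=1$.

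The main obstacle is precisely this Alexander factor, which cannot be produced by the formal eigenvalue bookkeeping that settles the Jones case: because $A_{\mathcal{K}}$ depends on the knot, identifying it requires controlling the structure of $\widehat{V^{\alpha}}$ at $q=\zeta_{2r}$, where the module becomes reducible and non-semisimple, and showing that the mismatch between the completion-valued eigenvalue and the finitely normalized ADO invariant is exactly $A_{\mathcal{K}}(q^{2r\alpha})$, independently of the chosen $(1,1)$-tangle presentation of $\mathcal{K}$. This is where the compatibility of the root-of-unity evaluation with the completion ring $\widehat{\Z[q,q^{\alpha}]}$, emphasized throughout the paper, must be used with the greatest care.
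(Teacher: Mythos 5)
Note first that the paper you are working from does not actually prove this theorem: it is imported verbatim from \cite{Willetts2020AUO}, so the comparison has to be made against the argument given there. Your treatment of the first identity is essentially that argument. At $q^{\alpha}=q^{N}$ the factor $\{\alpha-i;n\}_q$ kills $F^{(n)}v_i$ exactly when the target index $n+i$ exceeds $N$, so $\mathrm{span}\{v_0,\dots,v_N\}$ is the $(N+1)$-dimensional simple module, the central element $Q^{\tilde{\mathcal{U}}}(\mathcal{K})$ acts on it by the same scalar as on $v_0$, and that scalar is the normalized colored Jones polynomial by the Reshetikhin--Turaev construction; the point you flag (that $\alpha\mapsto N$ is compatible with the ideals $I_n$, i.e.\ descends from $\widehat{\Z[q,q^{\alpha}]}$ to Habiro's ring) is indeed the only thing left to check and is routine.

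The ADO identity, however, is not proved by your proposal: you state that the factor $A_{\mathcal{K}}(q^{2r\alpha})$ ``cannot be produced by the formal eigenvalue bookkeeping'' and that you \emph{expect} it to arise from an MMR-style resummation, but that expectation is precisely the content of the theorem, and no mechanism for establishing it is supplied. The missing ingredients are (i) the structure of $\widehat{V^{\alpha}}$ at $q=\zeta_{2r}$ --- since $Kv_{kr}=\zeta_{2r}^{-2kr}q^{\alpha}v_{kr}=q^{\alpha}v_{kr}$, the vectors $v_{kr}$ generate a descending chain of submodules whose successive subquotients are the $r$-dimensional nilpotent modules on which $ADO_r$ is computed, which is how the finite module is actually extracted from the completed Verma; and (ii) the identification of the resulting discrepancy between the scalar on the completed module and the $(1,1)$-tangle invariant of the finite subquotient with $1/A_{\mathcal{K}}(q^{2r\alpha})$, a knot-dependent statement (a root-of-unity analogue of Melvin--Morton--Rozansky) that is a separate theorem in \cite{Willetts2020AUO} and cannot be obtained by Schur-lemma bookkeeping alone. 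As written, your text is a correct proof sketch for half of the statement and an accurate description of why the other half is hard, but the second displayed equality remains underived.
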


\section{Vassiliev invariants and the ADO polynomials}
\subsection{Vassiliev invariants}
Let $\mathsf{K}$ be the free abelian group generated oriented knots in $S^3$. Let $\mathsf{K}_d$ be the subgroup generated by singular knots with $d$ double points using the identification :

\begin{figure}[h!]
\begin{subfigure}[b]{1\textwidth}
 \centering
  \def\svgwidth{50mm}
    \resizebox{85mm}{!}{%% Creator: Inkscape inkscape 0.92.3, www.inkscape.org
%% PDF/EPS/PS + LaTeX output extension by Johan Engelen, 2010
%% Accompanies image file 'singular_knot_rel.pdf' (pdf, eps, ps)
%%
%% To include the image in your LaTeX document, write
%%   \input{<filename>.pdf_tex}
%%  instead of
%%   \includegraphics{<filename>.pdf}
%% To scale the image, write
%%   \def\svgwidth{<desired width>}
%%   \input{<filename>.pdf_tex}
%%  instead of
%%   \includegraphics[width=<desired width>]{<filename>.pdf}
%%
%% Images with a different path to the parent latex file can
%% be accessed with the `import' package (which may need to be
%% installed) using
%%   \usepackage{import}
%% in the preamble, and then including the image with
%%   \import{<path to file>}{<filename>.pdf_tex}
%% Alternatively, one can specify
%%   \graphicspath{{<path to file>/}}
%% 
%% For more information, please see info/svg-inkscape on CTAN:
%%   http://tug.ctan.org/tex-archive/info/svg-inkscape
%%
\begingroup%
  \makeatletter%
  \providecommand\color[2][]{%
    \errmessage{(Inkscape) Color is used for the text in Inkscape, but the package 'color.sty' is not loaded}%
    \renewcommand\color[2][]{}%
  }%
  \providecommand\transparent[1]{%
    \errmessage{(Inkscape) Transparency is used (non-zero) for the text in Inkscape, but the package 'transparent.sty' is not loaded}%
    \renewcommand\transparent[1]{}%
  }%
  \providecommand\rotatebox[2]{#2}%
  \newcommand*\fsize{\dimexpr\f@size pt\relax}%
  \newcommand*\lineheight[1]{\fontsize{\fsize}{#1\fsize}\selectfont}%
  \ifx\svgwidth\undefined%
    \setlength{\unitlength}{1069.93055449bp}%
    \ifx\svgscale\undefined%
      \relax%
    \else%
      \setlength{\unitlength}{\unitlength * \real{\svgscale}}%
    \fi%
  \else%
    \setlength{\unitlength}{\svgwidth}%
  \fi%
  \global\let\svgwidth\undefined%
  \global\let\svgscale\undefined%
  \makeatother%
  \begin{picture}(1,0.26181283)%
    \lineheight{1}%
    \setlength\tabcolsep{0pt}%
    \put(0,0){\includegraphics[width=\unitlength,page=1]{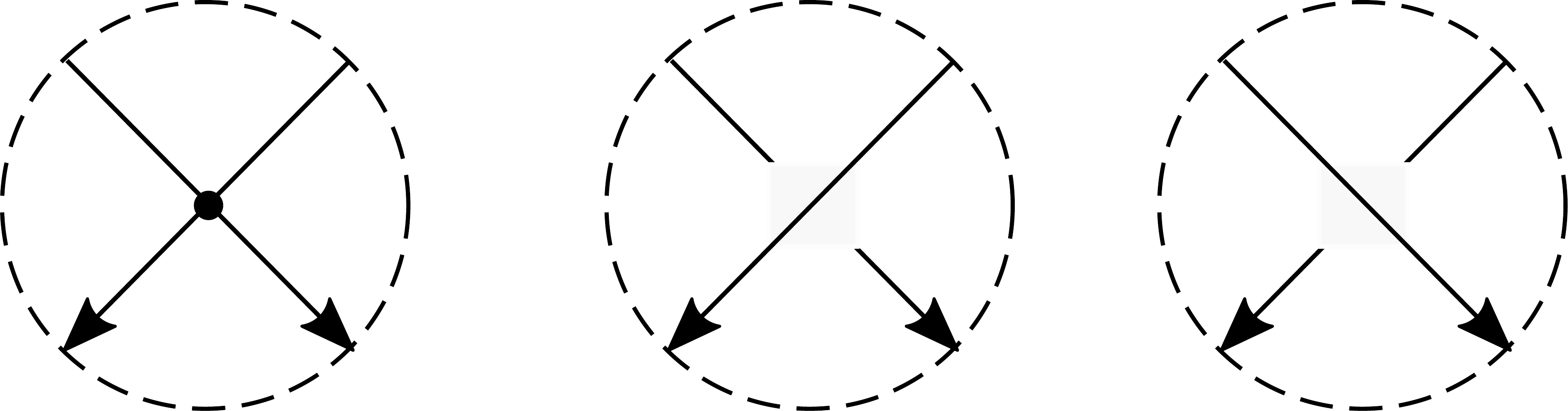}}%
    \put(0.30640231,0.13204497){\color[rgb]{0,0,0}\makebox(0,0)[lt]{\lineheight{1.25}\smash{\begin{tabular}[t]{l}=\\\end{tabular}}}}%
    \put(0.69050112,0.12699501){\color[rgb]{0,0,0}\makebox(0,0)[lt]{\lineheight{1.25}\smash{\begin{tabular}[t]{l}-\\\end{tabular}}}}%
  \end{picture}%
\endgroup%
}
 \end{subfigure}%
   \caption{Relation embedding singular knots in $\mathsf{K}$}
   \label{singular_relation}
 \end{figure}

where, using a diagram of the singular knot,  the pictures in Figure \ref{singular_relation} correspond to a local change at a neighborhood of the double point.

The knots in $\mathsf{K}$ are endowed with the $0$ framing.

%\begin{rem}
%The diagrams of (non singular) knots that we obtained from the diagram of a singular knot using this relation are not blackboard framed. They are framed such that the underlying knot is $0$ framed. 
%\end{rem}

A linear map $\nu: \mathsf{K} \to \Q$ is a \textit{Vassiliev invariant of degree $d$} if $\nu|_{\mathsf{K}_{d+1}}=0$.

\begin{note}\hypertarget{NB_degree}
In this definition, a Vassiliev invariant of degree $d$ is also of degree $n$ for all $n \geq d$. This means that, in the rest of the article, a Vassiliev invariant of degree $d$ must be understood as a Vassiliev invariant \textit{of degree $d$ or less}. \\
We do not give the smallest degree for any invariants.
\end{note}

\begin{rem}
Instead of free abelian groups, we could also consider $\Q$ vector spaces.
\end{rem}

\begin{ex}
The colored Jones polynomials, written as $h$-adic power series by setting $q=e^h$, have Vassiliev invariants coefficients. More precisely, if $\mathcal{K}$ is an oriented knot, we endow it with the 0 framing and we have:

\[ J_N(q^2, \mathcal{K})= \sum_d a_{N,d} h^d \] where $a_{N,d}(\mathcal{K}) \in \Q$ and the map $\mathsf{K} \to \Q$, $\mathcal{K} \mapsto a_{N,d}(\mathcal{K}) $ is a Vassiliev invariant of degree\textsuperscript{\hyperlink{NB_degree}{1}} $d$ (see Corollary 7.5 in \cite{ohtsuki2002quantum}).
\end{ex}

Now let's define a broader notion: topological Vassiliev invariants.
\begin{defn}
Let $G$ be an abelian group, $\mathcal{T}$ a topology on $G$.

We say that $v: \mathsf{K} \to G$ is a $\mathcal{T}$-\textit{topological Vassiliev invariant} if $\forall V$ neighborhood of $0$, $\exists N \in \N$ such that $\forall n \ssup N$, $v(\mathsf{K}_n) \subset V$.

\end{defn}

\begin{ex}~
\begin{itemize}
\item A Vassiliev invariant $v: \mathsf{K} \to \Z$ is a discrete topological Vassiliev invariant.
\item The colored Jones invariant $J_N(e^{2h}, \mathcal{K})$ is a $h$-adic topological Vassiliev invariant.
\end{itemize}
\end{ex}

\subsection{Integral power series approach}
We now transpose the unified invariant into a power series ring intermediary between $\widehat{\Z[q,q^{\alpha}]}$ and $ \Q[[h]]$, allowing us on one hand to keep some integral features and evaluate eventually at root of unity, and on the other hand to see it as a series with Vassiliev invariant coefficients.
%Let $F_{\infty}(q, q^{\alpha}, \mathcal{K})$ be the unified invariant constructed in (??). It is an element of the ring completion $\widehat{\Z[q,q^{\alpha}]}$.

%= \underset{\underset{n}{\leftarrow}}{\lim} \frac{\Z[q,q^{\alpha}]}{(\{\alpha + m; n \}, m \in \Z)}

\begin{prop}
The natural map $\widehat{\Z[q,q^{\alpha}]} \hookrightarrow \Z[[q-1,q^{\alpha}-1]]$ is injective.
\end{prop}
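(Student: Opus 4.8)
The plan is to identify $\Z[[q-1,q^{\alpha}-1]]$ with the $\mathfrak m$-adic completion of $R=\Z[q^{\pm1},q^{\pm\alpha}]$ for the ideal $\mathfrak m=(q-1,q^{\alpha}-1)$, and to compare the two filtrations $(I_n)_n$ and $(\mathfrak m^n)_n$. First I would check the inclusion $I_n\subseteq\mathfrak m^n$: the generator $\{\alpha+l;n\}_q=\prod_{i=0}^{n-1}\{\alpha+l-i\}_q$ is a product of $n$ factors $\{\alpha+l-i\}_q=q^{\alpha}q^{l-i}-q^{-\alpha}q^{-(l-i)}$, each vanishing at $q=1,\ q^{\alpha}=1$ and hence lying in $\mathfrak m$; so $I_n\Z[[q-1,q^{\alpha}-1]]\subseteq\mathfrak m^n$. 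Consequently the composite $R\hookrightarrow\Z[[q-1,q^{\alpha}-1]]\twoheadrightarrow\Z[[q-1,q^{\alpha}-1]]/\mathfrak m^n$ kills $I_n$ and factors through $R/I_n$; these maps are compatible with the projections, and since $\Z[[q-1,q^{\alpha}-1]]=\varprojlim_n\Z[[q-1,q^{\alpha}-1]]/\mathfrak m^n$, the universal property of the inverse limit produces the natural map $\hat R\to\Z[[q-1,q^{\alpha}-1]]$.

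For injectivity I would take $(a_n)\in\hat R$ in the kernel and show every $a_n$ vanishes. Write $\mathfrak m_R=(q-1,q^{\alpha}-1)R$ for the contraction of $\mathfrak m$. Since $R$ is Noetherian with $\mathfrak m_R$-adic completion $\Z[[q-1,q^{\alpha}-1]]$, one has the standard contraction identity $R\cap\mathfrak m^n=\mathfrak m_R^{\,n}$, so a lift $b_n\in R$ of $a_n$ lies in $\mathfrak m_R^{\,n}$. The compatibility $\rho_n(a_{n+1})=a_n$ translates into $b_{n+1}-b_n\in I_n$, and telescoping gives $b_n\equiv b_m\pmod{I_n}$ for every $m\ge n$, with $b_m\in\mathfrak m_R^{\,m}$. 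Hence $b_n\in\bigcap_{m}(I_n+\mathfrak m_R^{\,m})$, the $\mathfrak m_R$-adic closure $\overline{I_n}$. Thus the whole statement reduces to showing that each ideal $I_n$ is $\mathfrak m_R$-adically closed, i.e. $\overline{I_n}=I_n$.

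This closedness is the crux. As completion along $\mathfrak m_R$ is flat over the Noetherian ring $R$, one has $\overline{I_n}=I_n\Z[[q-1,q^{\alpha}-1]]\cap R$, and Krull's intersection theorem applied to the finitely generated module $R/I_n$ identifies $\overline{I_n}/I_n$ with $\{\bar z\in R/I_n:\ (1+a)\bar z=0\text{ for some }a\in\mathfrak m_R\}$. The subtlety is that $\mathfrak m_R$ is here prime but not maximal ($R/\mathfrak m_R\cong\Z$), so the usual Jacobson-radical shortcut is unavailable; I must instead prove that every element $1+a$ with $a\in\mathfrak m_R$ is a non-zero-divisor modulo $I_n$. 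I would establish this from a primary decomposition of $I_n$, showing $1+a$ lies in no associated prime $\mathfrak p$: if $\mathfrak p\subseteq\mathfrak m_R$ this is automatic (otherwise $1\in\mathfrak m_R$), while for the other components I would use the explicit zero locus of the generators, $q^{2\alpha}=q^{-2l}$, together with the integrality of the coefficients, to see that $1+a$ cannot vanish on them. Pinning down the associated primes of $I_n$, in particular ruling out troublesome embedded primes, is the main obstacle; an appealing alternative, should the cyclotomic-type normal form $\sum_n c_n(q)\{\alpha;n\}_q$ for elements of $\hat R$ be available, is to argue instead by extracting the (nonzero) lowest-order term, using that $\{\alpha;n\}_q$ has $\mathfrak m$-adic order exactly $n$.
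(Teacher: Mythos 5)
The paper does not prove this proposition at all; it cites Proposition 6.9 of Habiro's \emph{An integral form of the quantized enveloping algebra of $\mathfrak{sl}_2$ and its completions}, which is exactly this embedding statement. Your proposal attempts a genuine proof, and its reductions are correct as far as they go: the inclusion $I_n\subseteq\mathfrak m^n$ (each factor $\{\alpha+l-i\}_q$ lies in the augmentation ideal) does yield the natural map by the universal property, the contraction identity $R\cap\mathfrak m^n=\mathfrak m_R^{\,n}$ is valid for the Noetherian ring $R$, and the identification of the kernel question with the statement ``$\overline{I_n}=I_n$ for every $n$'', via Artin--Rees and Krull's intersection theorem, is a correct and clean reduction. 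The problem is that you stop precisely at the step that carries all the content.

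Concretely, the gap is your final paragraph: to conclude you must show that every element of $1+\mathfrak m_R$ is a non-zero-divisor on $R/I_n$, equivalently that $1+\mathfrak m_R$ meets no associated prime of $R/I_n$. You verify this only for primes contained in $\mathfrak m_R$ (where it is trivial) and wave at ``the explicit zero locus of the generators'' for the rest, while yourself flagging the determination of the associated primes --- in particular the possible embedded primes of the rather complicated ideals $I_n=(\{\alpha+l;n\}_q,\ l\in\Z)$ --- as ``the main obstacle''. That obstacle is the theorem. Already for $n\geq 2$ the quotients $R/I_n$ are not obviously reduced and no primary decomposition is produced, so the argument as written does not establish closedness of $I_n$ and hence does not establish injectivity. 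The alternative you mention in your last sentence --- a normal form $\sum_n c_n(q)\{\alpha;n\}_q$ for elements of $\widehat{\Z[q,q^{\alpha}]}$ together with the fact that $\{\alpha;n\}_q$ has $\mathfrak m$-adic order exactly $n$, allowing one to extract a nonzero lowest-order term --- is much closer to how Habiro actually proves the result, but you present it conditionally rather than carrying it out. As it stands, the proposal is an honest and well-organized reduction of the proposition to an unproved commutative-algebra claim, not a proof.
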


\begin{proof}
See Prop 6.9 of \cite{habiro2007integral}.
\end{proof}

Moreover from Theorem \ref{thm_unified_ado_jones}, we have $F_{\infty}(q, q^{N}, \mathcal{K})= J_N(q^2,\mathcal{K})$, thus we can write the unified invariant in $q^2$: \[F_{\infty}(q, q^{\alpha}, \mathcal{K})= \sum_{n,m \geq 0} b_{n,m}(\mathcal{K}) (q^2-1)^n (q^{2\alpha}-1)^m\] where $b_{n,m} \in \Z$.

\begin{rem}
We can write $q^{-\alpha} =\sum_{m\geq 0} (1-q^{\alpha})^m$.
\end{rem}

\begin{prop}\label{Vassiliev_coeff_unified}
The coefficient $b_{n,m}(\mathcal{K})$ is a Vassiliev invariant of degree\textsuperscript{\hyperlink{NB_degree}{1}} $n+m$.
\end{prop}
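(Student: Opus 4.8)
The plan is to exploit the known Vassiliev structure of the colored Jones polynomials together with the interpolation identity $F_{\infty}(q,q^{N},\mathcal{K}) = J_N(q^2,\mathcal{K})$ from Theorem \ref{thm_unified_ado_jones}. The key observation is that substituting $q^{\alpha} = q^{N}$ into the double power series and reorganizing must reproduce the $h$-adic expansion of $J_N$ whose coefficients are already known to be Vassiliev invariants. First I would fix the degree bound we are aiming for: the claim is that $b_{n,m}$ kills all singular knots with more than $n+m$ double points, so the natural strategy is to specialize the $q^{\alpha}$ variable and track how the filtration by double points interacts with the bidegree $(n,m)$.

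The central mechanism I would use is the following. Recall that a $\Q$-valued knot invariant is Vassiliev of degree $d$ if and only if, after writing $q=e^h$, it vanishes to $h$-adic order at least $d+1$ on any singular knot with $d+1$ double points; more precisely, applying the alternating resolution of a double point (the relation in Figure \ref{singular_relation}) to $F_{\infty}$ evaluated at $q^{\alpha}=q^N$ produces, via the Jones example, a quantity divisible by $h^{\,(\#\text{double points})}$. So I would substitute $q^{\alpha}=q^N$ into $\sum_{n,m} b_{n,m}(\mathcal{K})(q^2-1)^n(q^{2\alpha}-1)^m$, giving $\sum_{n,m} b_{n,m}(\mathcal{K})(q^2-1)^n(q^{2N}-1)^m$. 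Setting $q=e^h$, both $(q^2-1)$ and $(q^{2N}-1)$ are $h$ times a unit power series, so the $(n,m)$ term contributes at $h$-adic order exactly $n+m$ with leading coefficient $b_{n,m}(\mathcal{K})$ times a factor involving $N^m$. The coefficient of $h^{d}$ in $J_N(q^2,\mathcal{K})=\sum_d a_{N,d}(\mathcal{K})h^d$ is therefore a finite $\Z$-linear combination $\sum_{n+m=d} b_{n,m}(\mathcal{K})\,c_{n,m}(N)$ plus contributions from lower-order $b$'s (there are none below, only at exactly order $d$ from the leading terms) where $c_{n,m}(N)$ is a polynomial in $N$ of degree $m$.

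The next step is to invert this relation. For each fixed $d$, the coefficients $\{a_{N,d} : N\geq 0\}$ are all Vassiliev invariants of degree $d$, and the linear system expressing them in terms of $\{b_{n,m} : n+m=d\}$ has a Vandermonde-type structure in the variable $N$ because $c_{n,m}(N)$ has degree exactly $m$ in $N$. Varying $N$ over enough values (say $N=0,1,\dots,d$) lets me solve for each individual $b_{n,m}$ with $n+m=d$ as a $\Q$-linear combination of the $a_{N,d}$. Since a $\Q$-linear combination of Vassiliev invariants of degree $d$ is again Vassiliev of degree $d$, this yields $b_{n,m}\in$ (Vassiliev invariants of degree $n+m$), which is exactly the claim. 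One must check only that the triangular/Vandermonde system is invertible, which follows from the degrees $c_{n,m}(N)$ being distinct in $m$ for fixed $n$, and that the leading unit factors from $(q^2-1)/h$ etc. do not spoil the argument.

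The main obstacle I expect is bookkeeping the mixing between the $(q^2-1)$ and $(q^{2\alpha}-1)$ variables under the specialization $q^{\alpha}=q^N$: after the substitution both variables become functions of the single variable $h$, so the clean separation of $n$ and $m$ is lost and I must argue that the full block $\{b_{n,m}:n+m=d\}$ is determined by Jones data rather than trying to isolate each $(n,m)$ from a single value of $N$. The resolution is precisely to let $N$ range, recovering the missing parameter: the dependence $c_{n,m}(N)\sim N^m$ restores the second degree of freedom and makes the system solvable. I would therefore organize the proof around the claim that for each $d$ the span of $\{b_{n,m}:n+m=d\}$ equals the span of suitable finite differences of $\{a_{N,d}:N=0,\dots,d\}$, and deduce the Vassiliev property for each $b_{n,m}$ coefficient-wise.
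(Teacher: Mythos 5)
Your overall strategy --- specialize $q^{\alpha}=q^{N}$, use that the $h$-adic coefficients $a_{N,d}$ of the colored Jones polynomials are Vassiliev of degree $d$, and exploit the polynomial dependence on $N$ to separate the block $\{b_{n,m}:\ n+m=d\}$ --- is exactly the mechanism of the paper's proof. But there is a genuine error at the hinge of your argument: the parenthetical claim that the coefficient of $h^{d}$ in $J_N(q^2,\mathcal{K})$ receives no contributions from $b_{n,m}$ with $n+m<d$. This is false. Writing $q=e^{h}$, one has $(q^{2}-1)^{n}(q^{2N}-1)^{m}=2^{n+m}N^{m}h^{n+m}\bigl(1+(n+mN)h+\cdots\bigr)$, so every $b_{n,m}$ with $n+m\le d$ contributes to $a_{N,d}$, not only those with $n+m=d$. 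Consequently the relation you propose to invert is not $a_{N,d}=\sum_{n+m=d}b_{n,m}c_{n,m}(N)$ but that plus a nontrivial linear combination of the lower-degree $b$'s with polynomial-in-$N$ coefficients, and your Vandermonde inversion over $N=0,\dots,d$ does not by itself express $b_{n,m}$ (for $n+m=d$) in terms of the $a_{N,d}$ alone.

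The gap is repairable, and the paper's proof shows the standard repair: induct on $d=n+m$. Evaluating everything on a singular knot $K\in\mathsf{K}_{d+1}$, the inductive hypothesis gives $b_{n,m}(K)=0$ for $n+m<d$, which kills exactly the contamination you overlooked; the terms with $n+m>d$ are divisible by $h^{d+1}$ and so do not contribute to $a_{N,d}(K)$; and since $a_{N,d}(K)=0$ (it is Vassiliev of degree $d$), one is left with $0=\sum_{n+m=d}b_{n,m}(K)\,2^{d}N^{m}$ for all $N$, whence each $b_{n,m}(K)=0$ by polynomiality in $N$ --- this is where your Vandermonde observation correctly enters. Alternatively you could make your inversion honest by solving the full block-triangular system over all total degrees $d'\le d$, expressing $b_{n,m}$ as a $\Q$-linear combination of $\{a_{N,d'}:\ d'\le d,\ N\le d\}$ (all Vassiliev of degree at most $d$); either way, some induction or triangularity over the total degree is required and is missing from your write-up as it stands.
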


\begin{proof}
 Since $F_{\infty}(q, q^{N}, \mathcal{K})= J_N(q^2,\mathcal{K})$, we proceed by induction on $d=n+m$:

\noindent - $b_{0,0}(\mathcal{K})= a_{N,0}(\mathcal{K})$.\\
 - Let $d \in \N$, suppose that for every $n,m$ such that $n+m <d$, $b_{n,m}$ is a Vassiliev invariant of degree\textsuperscript{\hyperlink{NB_degree}{1}} $n+m$.
 
 Now let us understand $a_{N,d}$ in terms of $b_{n,m}$. Remark that if $n+m >d$ then $h^{d+1} | (q-1)^n (q^N -1)^m$, thus $b_{n,m}$ doesn't contribute to $a_{N,d}$ whenever $n+m>d$. Moreover, if we have $n+m=d$, only the first term of the exponential expansion of $(q^2-1)^n (q^{2\alpha}-1)^m $ will contribute to $a_{N,d}$: $ b_{n,m} 2^{n+m} N^m$ for each $n,m$ such that $n+m=d$.
 
 We don't even have to look at the contribution coming from $n+m<d$, indeed on an element $K \in \mathsf{K}_{d+1}$, $b_{n,m}(K)=0$ when $n+m < d$ by induction hypothesis.
 Thus, $\forall N \in \N$ $0=a_{N,d} (K)= \sum_{n+m=d} b_{n,m} 2^{n+m} N^m$ since $a_{N,d}(K)$ is a Vassiliev invariant of degree $d$.\\
 Since this equality holds for all $N \in \N^*$ and since it is a polynomial in $N$, this means that $b_{n,m}(K)=0$ $\forall n,m$ such that $n+m=d$. Hence they are Vassiliev invariants of degree $n+m$.
\end{proof}

\begin{rem}
As a corollary, the unified invariant $F_{\infty}(q,q^{\alpha}, \mathcal{K})$ is a topological Vassiliev invariant for the $((q-1)^n (q^{\alpha}-1)^m)_{n,m \in \N^*}$ filtration topology.

\end{rem}

\subsection{Recovering ADO}

Now we must transpose these results to the $ADO$ polynomials. 

In the $\widehat{\Z[q,q^{\alpha}]}$ setup we could recover $ADO$ by evaluating at roots of unity. We have the result:
\[ F_{\infty}(\zeta_{2r}, q^{\alpha}, \mathcal{K}) = \frac{ADO_r(q^{\alpha}, \mathcal{K})}{A_{\mathcal{K}} (q^{2r\alpha})} \] where $A_{\mathcal{K}} (q^{2r\alpha})$ is the Alexander polynomial of $\mathcal{K}$.

\smallskip

We can also evaluate at roots of unity in the ring $\Z[[q-1, q^{\alpha}-1]]$, but we must be much more careful. Indeed, the codomain of such an evaluation is a $(\zeta_r-1)$-adic completion, which is trivial if $\zeta_r-1$ is invertible, hence we will need some conditions on $r$.

That being said, we will be able to study the factorisation at roots of unity in this setup and prove that the ADO polynomials are $\zeta_r-1$ adic topological Vassiliev invariants. 

We will then study the modulo $r$ case, and show some asymptotic behaviour when $r$ grows.

%Nevertheless, in the ring $\Z[[q-1, q^{\alpha}-1]]$ we cannot easily evaluate at roots of unity.
%%We can circumvent this problem by using the fact that: \[A_{\mathcal{K}} (q^{2r\alpha}) \times F_{\infty}(\zeta_{2r}, q^{\alpha}, \mathcal{K}) \in \Z[\zeta_{2r}, q^{\alpha}]. \]
%We can circumvent this problem by quotienting by the ideal $((q^2-1)^n (q^{2\alpha}-1)^m)$, and then evaluate at root of unity; the price to pay is that we will get the ADO polynomial in some quotient ring of $\Z[\zeta_{2r}][q^{\alpha}]$ and we must study whether it's a non trivial ring.

%This means that we will get the truncation of the following element: \[A_{\mathcal{K}} (q^{2r\alpha}) \times F_{\infty}(q, q^{\alpha}, \mathcal{K}) \in \Z[[q-1, q^{\alpha}-1 ]].\]
%
%Hence we must study this element as a power series and check that the coefficients are Vassiliev invariants.

\paragraph{The study of the Alexander polynomial:}~\\
First of all we need to study the the product $F_{\infty}(q,A,\mathcal{K}) \times A_{\mathcal{K}} (q^{2r\alpha})$ as power series with Vassiliev invariant coefficients. We focus this first paragraph on the term $A_{\mathcal{K}} (q^{2r\alpha})$.

\smallskip
Let us write $A_{\mathcal{K}} (q^{2r\alpha})$ as a power series (it exists since it is a Laurent polynomial).

\[ A_{\mathcal{K}} (q^{2r\alpha}) = \sum_{m} \lambda_m (\mathcal{K}) (q^{2r \alpha} -1)^m = \sum_{m} \tilde{\lambda}_m (r,\mathcal{K}) (q^{ 2\alpha} -1)^m.\]

%In order to prove that the $\lambda_m$ are Vassiliev invariants, we can look at the inverse of the Alexander polynomial power series expansion:
%
%\[ \frac{1}{A_{\mathcal{K}} (q^{2\alpha})}= F_{\infty}(1,q^{\alpha}, \mathcal{K})= \sum_m b_{0,m} (q^{\alpha}-1)^m \in \Z[[q^{\alpha}-1]] \]
%
%Hence, \[ \frac{1}{A_{\mathcal{K}} (q^{2r\alpha})}= \sum_m b_{0,m} (q^{r\alpha}-1)^m \]

The fact that $\lambda_m (\mathcal{K})$ are Vassiliev invariants comes from the skein relation verified by the Alexander Polynomial: \[A_{\mathcal{K}_+}(q^{2\alpha})-A_{\mathcal{K}_-}(q^{2\alpha})= \{\alpha\}A_{\mathcal{K}_0}(q^{2\alpha}) \] where $\mathcal{K}_{+,-,0}$ are defined in Figure \ref{alexander_skein}.

\begin{figure}[h!]
\begin{subfigure}[b]{0.33\textwidth}
 \centering
  \def\svgwidth{25mm}
    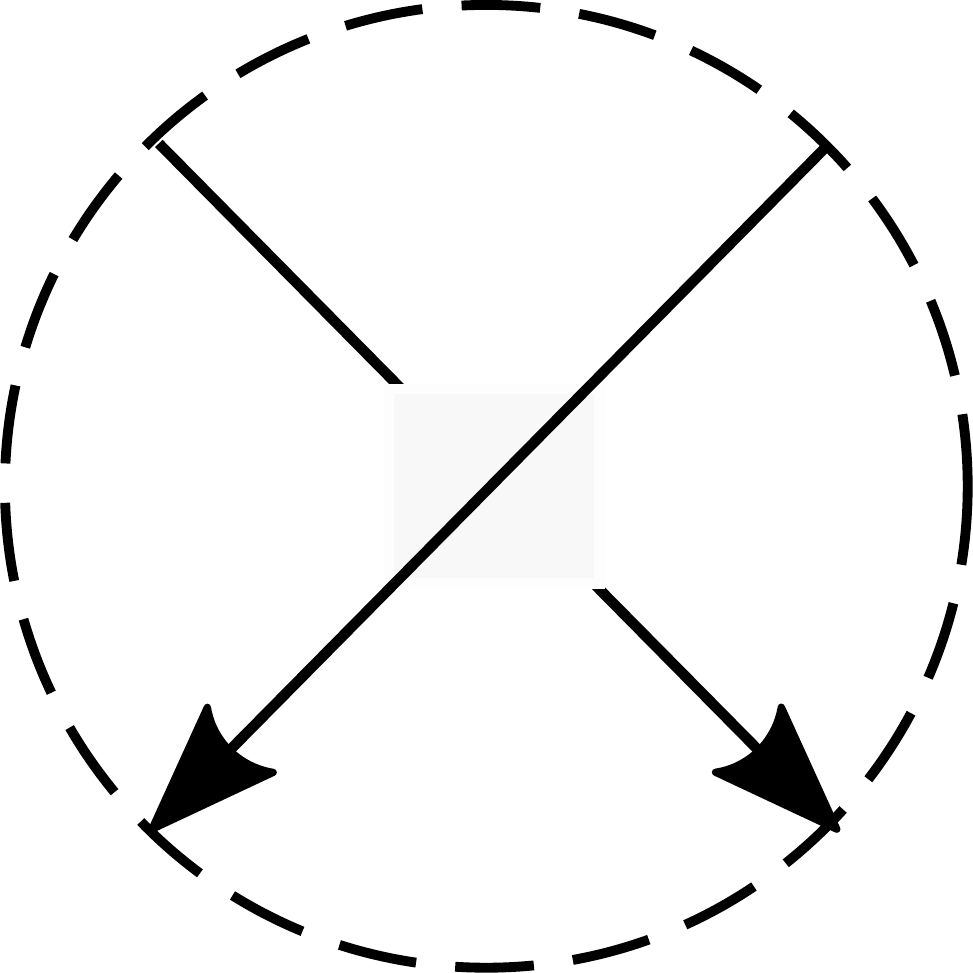
  \label{alexander_skein_pos}
   \caption{$\mathcal{K}_+$}
 \end{subfigure}%
\begin{subfigure}[b]{0.33\textwidth}
 \centering
  \def\svgwidth{25mm}
    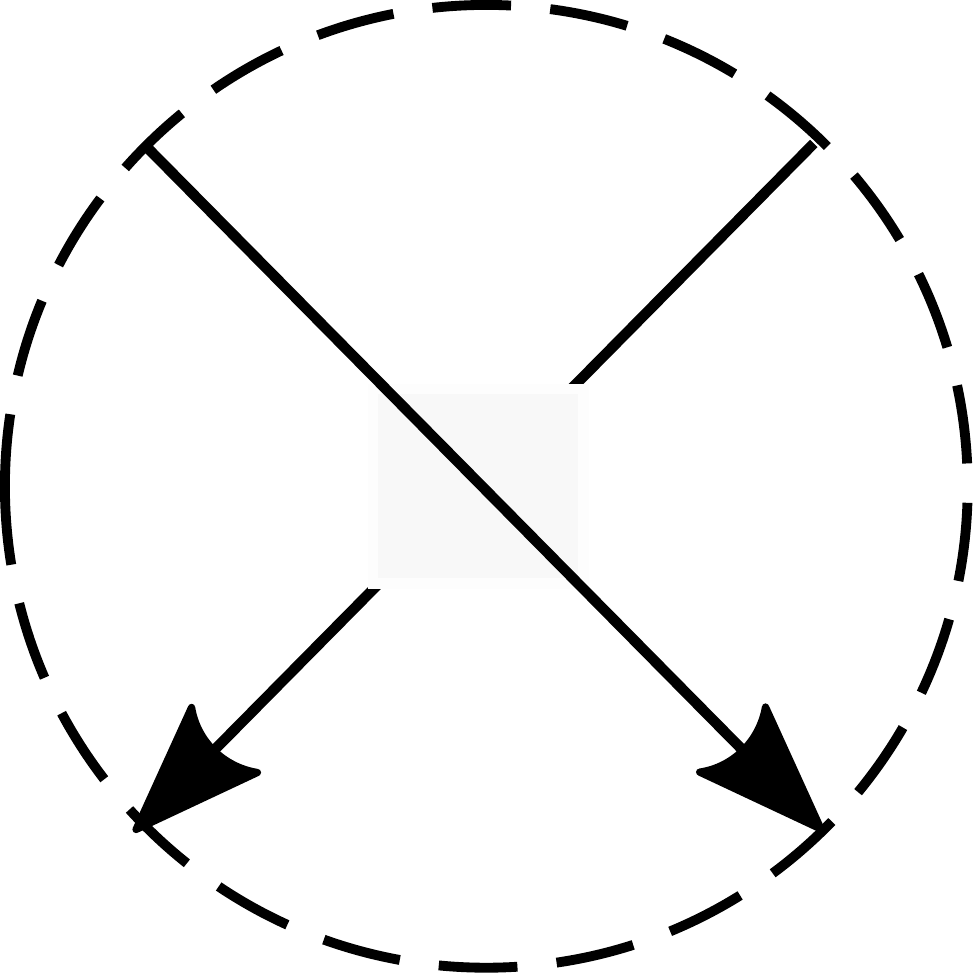
  \label{alexander_skein_neg}
   \caption{$\mathcal{K}_-$}
 \end{subfigure}%
\begin{subfigure}[b]{0.33\textwidth}
 \centering
  \def\svgwidth{25mm}
    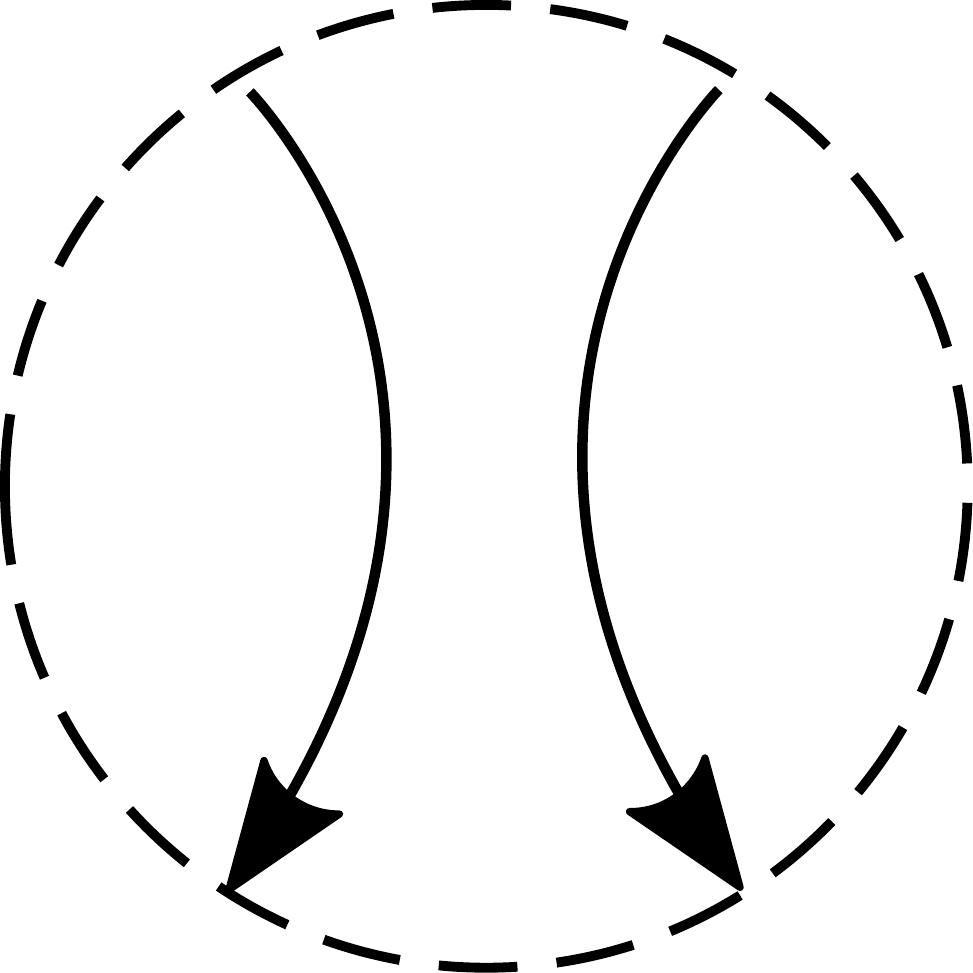
  \label{alexander_skein_0}
   \caption{$\mathcal{K}_0$}
 \end{subfigure}%
 \caption{Local changes for a knot $\mathcal{K}$}
 \label{alexander_skein}
 \end{figure}

\begin{prop}
The coefficient $\lambda_m (\mathcal{K})$ is a Vassiliev invariant of degree\textsuperscript{\hyperlink{NB_degree}{1}} $m$.

\end{prop}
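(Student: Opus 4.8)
The plan is to show that $\lambda_m$ vanishes on every singular knot carrying $m+1$ double points, i.e. on $\mathsf{K}_{m+1}$. First I would record that the whole Alexander invariant $\mathcal{K} \mapsto A_{\mathcal{K}}(q^{2\alpha})$ extends $\Q$-linearly to $\mathsf{K}$ through the identification of Figure \ref{singular_relation}, so that on a knot with a single double point the skein relation reads $A_{\mathcal{K}_+}(q^{2\alpha}) - A_{\mathcal{K}_-}(q^{2\alpha}) = \{\alpha\}\, A_{\mathcal{K}_0}(q^{2\alpha})$; in other words the value of the extended invariant at a double point equals $\{\alpha\}$ times its value at the oriented smoothing. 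Since $\lambda_m$ is just the coefficient extraction from this invariant, it inherits the linear extension and it suffices to control the $(q^{2\alpha}-1)$-adic order of vanishing of $A_D(q^{2\alpha})$ for singular $D$.

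The key elementary observation is that $\{\alpha\} = q^\alpha - q^{-\alpha} = q^{-\alpha}(q^{2\alpha}-1)$ has $(q^{2\alpha}-1)$-adic valuation exactly $1$: indeed $q^{-\alpha} = \bigl(1 + (q^{2\alpha}-1)\bigr)^{-1/2}$ is a unit in $\Q[[q^{2\alpha}-1]]$. More generally, the Alexander polynomial of any oriented link is a Laurent polynomial in $t^{\pm 1/2}$, so after the substitution $t = q^{2\alpha}$ and expansion at $t=1$ it becomes a power series in $(q^{2\alpha}-1)$ of valuation $\geq 0$ (there is no pole at $t=1$).

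I would then iterate the skein relation. Given a singular knot $D$ with $d$ double points, resolving them one at a time replaces, at each step, a double point by its smoothing and pulls out a factor $\{\alpha\}$; after $d$ steps one obtains $A_D(q^{2\alpha}) = \{\alpha\}^d\, A_{D_0}(q^{2\alpha})$, where $D_0$ is the oriented link obtained by smoothing all $d$ double points. Combining this with the two valuation facts above shows that $A_D(q^{2\alpha})$ is divisible by $(q^{2\alpha}-1)^d$ in $\Q[[q^{2\alpha}-1]]$, so every coefficient $\lambda_j(D)$ with $j < d$ vanishes.

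Applying this with $d = m+1$ gives $\lambda_m(D) = 0$ for all $D \in \mathsf{K}_{m+1}$, which is exactly the assertion that $\lambda_m$ is a Vassiliev invariant of degree $m$. The only point requiring care is the inductive resolution of several double points: one must verify that each intermediate diagram is an honest oriented link whose Alexander polynomial is again expandable as a power series of nonnegative valuation, so that the accumulated factor $\{\alpha\}^d$ genuinely forces the claimed divisibility. I expect the difficulty here to be bookkeeping rather than conceptual.
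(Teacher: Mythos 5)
Your argument is correct and is essentially the paper's proof: the paper likewise deduces from the skein relation that $\{\alpha\}^{d+1}$ divides the (linearly extended) Alexander invariant of any singular knot with $d+1$ double points, hence $\lambda_d$ vanishes on $\mathsf{K}_{d+1}$. You simply spell out the two supporting facts the paper leaves implicit — that iterating the skein resolution pulls out one factor of $\{\alpha\}$ per double point, and that $\{\alpha\}$ has $(q^{2\alpha}-1)$-adic valuation exactly $1$ while the Alexander polynomial of the fully smoothed link has valuation $\geq 0$.
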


\begin{proof}
From the skein relation we get that for any $K_{d+1}$ singular knot with $d+1$ double points, $\{\alpha\}^{d+1} | A_{K_{d+1}}(q^{2\alpha})$. Hence $\lambda_d (K_{d+1})=0$, $\forall K_{d+1} \in \mathsf{K}_{d+1}$.

\end{proof}

Now we can prove that the $\tilde{\lambda}_m (r,\mathcal{K})$ are also Vassiliev invariants of degree $m$.
\begin{lemma}\label{Vassiliev_lemma_alexander_variable}
The $\tilde{\lambda}_m (r,\mathcal{K})$ are Vassiliev invariants of degree\textsuperscript{\hyperlink{NB_degree}{1}} $m$.
\end{lemma}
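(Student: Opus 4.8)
The plan is to leverage the fact, just established, that each $\lambda_m(\mathcal{K})$ is a Vassiliev invariant of degree $m$, and to show that every $\tilde{\lambda}_M(r,\mathcal{K})$ is a \emph{finite} $\Z$-linear combination of the $\lambda_m(\mathcal{K})$ with $m \le M$, where the combining coefficients depend only on $r,m,M$ and not on the knot. Since a finite $\Z$-linear combination of Vassiliev invariants of degree $\le M$ is again a Vassiliev invariant of degree $M$ (using the convention that ``degree $M$'' means ``degree $\le M$''), the lemma will follow at once.

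First I would perform the change of variable relating the two power-series expansions of $A_{\mathcal{K}}(q^{2r\alpha})$. Writing $u=q^{2\alpha}-1$, so that $q^{2\alpha}=1+u$ and $q^{2r\alpha}=(1+u)^r$, the binomial theorem gives
\[ q^{2r\alpha}-1 = (1+u)^r - 1 = \sum_{k=1}^{r}\binom{r}{k}u^k. \]
The crucial point is that the right-hand side has $u$-valuation exactly $1$ (its lowest-order term is $r\,u$), so that $(q^{2r\alpha}-1)^m$, viewed as a power series in $u$, has $u$-valuation exactly $m$. Substituting this into $\sum_m \lambda_m(\mathcal{K})(q^{2r\alpha}-1)^m = \sum_M \tilde{\lambda}_M(r,\mathcal{K})\,u^M$ and collecting the coefficient of $u^M$ yields
\[ \tilde{\lambda}_M(r,\mathcal{K}) = \sum_{m=0}^{M} c_{m,M}(r)\,\lambda_m(\mathcal{K}), \]
where $c_{m,M}(r)\in\Z$ is the coefficient of $u^M$ in $\bigl(\sum_{k\ge 1}\binom{r}{k}u^k\bigr)^m$. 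The sum is finite precisely because the valuation remark forces $c_{m,M}(r)=0$ whenever $m>M$; in other words the transition matrix $(c_{m,M}(r))$ is triangular. As the $c_{m,M}(r)$ are integers independent of $\mathcal{K}$ and each $\lambda_m$ with $m\le M$ is a Vassiliev invariant of degree $m\le M$, the map $\tilde{\lambda}_M(r,\cdot)$ is a $\Z$-linear combination of Vassiliev invariants of degree at most $M$, hence itself a Vassiliev invariant of degree $M$.

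I do not anticipate a genuine obstacle here: the argument is essentially a triangular change of basis between the $(q^{2r\alpha}-1)$ and $(q^{2\alpha}-1)$ expansions, and the only bookkeeping to watch is the valuation count guaranteeing finiteness of the defining sum (equivalently, the lower-triangularity of $(c_{m,M}(r))$). One should also record that the $c_{m,M}(r)$ are honest integers, which is immediate since the $\binom{r}{k}$ are, so that no denominators are introduced and the conclusion holds over $\Z$.
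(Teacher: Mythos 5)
Your proof is correct and follows essentially the same route as the paper: the divisibility $(q^{2\alpha}-1)^k \mid (q^{2r\alpha}-1)^k$ (your valuation count) makes the change of basis triangular, so each $\tilde{\lambda}_M(r,\cdot)$ is a finite $\Z$-linear combination of the $\lambda_m$ with $m \le M$ and hence vanishes on $\mathsf{K}_{M+1}$. Your write-up simply makes explicit the integer coefficients $c_{m,M}(r)$ that the paper leaves implicit.
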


\begin{proof}
Since $(q^{2\alpha}-1)^k | (q^{2r\alpha}-1)^k$, $\tilde{\lambda}_m (r,\mathcal{K})$ can be written as a linear combination of $\lambda_k$ for $k \leq m$, and since they all vanish on $\mathsf{K}_{m+1}$, $\tilde{\lambda}_m (\mathcal{K})$ is a Vassiliev invariant of degree $m$.
\end{proof}

\paragraph{The study of the product:}~\\
Now let's study the product $F_{\infty}(q,A,\mathcal{K}) \times A_{\mathcal{K}} (q^{2r\alpha})$ using the multiplicativity of the Vassiliev invariants.

\begin{lemma}\label{lemma_Vassiliev_prod}
If $\mu$ and $\nu$ are Vassiliev invariants of degree $n$ and $m$ respectively, then $\mu \nu$ defined by $\mu \nu (\mathcal{K}) = \mu(\mathcal{K}) \nu(\mathcal{K})$ for any knot $\mathcal{K}$ is a Vassiliev invariant of degree $n+m$.
\end{lemma}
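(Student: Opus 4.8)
The plan is to reduce the statement to the finite-difference combinatorics of double-point resolutions. Fix a singular knot $K$ with $N$ double points, labelled $1,\dots,N$. Under the identification of Figure \ref{singular_relation}, $K$ is the element of $\mathsf{K}$ given by
\[ K = \sum_{\varepsilon \in \{+,-\}^N} \mathrm{sign}(\varepsilon)\, K_{\varepsilon}, \]
where $K_{\varepsilon}$ is the genuine knot obtained by turning the $i$-th double point into a positive crossing when $\varepsilon_i=+$ and into a negative crossing when $\varepsilon_i=-$, and $\mathrm{sign}(\varepsilon)=(-1)^{\#\{i\,:\,\varepsilon_i=-\}}$. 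Hence for any linear invariant $v \colon \mathsf{K}\to\Q$ the number $v(K)$ is the iterated finite difference $\Delta_1\cdots\Delta_N$ of the function $\varepsilon\mapsto v(K_{\varepsilon})$, where $\Delta_i$ subtracts the value at $\varepsilon_i=-$ from the value at $\varepsilon_i=+$. Saying that $v$ has degree $d$ is exactly saying that this iterated difference is $0$ whenever $N\geq d+1$. I will apply this with $v=\mu\nu$ and $N\geq n+m+1$ and show the difference vanishes.

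The key tool is the discrete Leibniz rule. Writing $E_i^{\pm}$ for the operator that fixes $\varepsilon_i$ to $\pm$, a one-variable computation gives $\Delta_i(fg)=(\Delta_i f)(E_i^{+}g)+(E_i^{-}f)(\Delta_i g)$. Since the operators attached to distinct double points commute, expanding the product over all $i$ yields
\[ \Delta_1\cdots\Delta_N(\mu\nu)\big(K_{\bullet}\big)=\sum_{A\subseteq\{1,\dots,N\}} \Big[\big(\textstyle\prod_{i\in A}\Delta_i\big)\big(\prod_{j\notin A}E_j^{-}\big)\mu(K_{\bullet})\Big]\cdot\Big[\big(\prod_{i\in A}E_i^{+}\big)\big(\prod_{j\notin A}\Delta_j\big)\nu(K_{\bullet})\Big]. \]
I then read each factor geometrically: the left factor is $\mu$ evaluated on the singular knot in which the double points of $A$ are kept and those outside $A$ are resolved negatively, a genuine singular knot with exactly $|A|$ double points; likewise the right factor is $\nu$ evaluated on the singular knot with the double points of the complement $A^{c}$ kept and those of $A$ resolved positively, a singular knot with $|A^{c}|$ double points.

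With this dictionary the degree bounds finish the argument. By hypothesis $\mu$ vanishes on $\mathsf{K}_{n+1}$ and $\nu$ on $\mathsf{K}_{m+1}$, so the left factor is $0$ once $|A|\geq n+1$ and the right factor is $0$ once $|A^{c}|\geq m+1$. A subset $A$ thus contributes only if $|A|\leq n$ and $|A^{c}|\leq m$, which forces $N=|A|+|A^{c}|\leq n+m$. When $N\geq n+m+1$ no subset survives, every summand is zero, and therefore $(\mu\nu)(K)=0$ for every $K\in\mathsf{K}_{n+m+1}$; this is exactly the claim that $\mu\nu$ has degree $n+m$.

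I expect the only delicate point to be the bookkeeping of the Leibniz expansion: one must check that commuting the binary difference and evaluation operators attached to different crossings is legitimate, and, more importantly, that each mixed operator $\prod_{i\in A}\Delta_i\prod_{j\notin A}E_j^{\pm}$, applied to $\varepsilon\mapsto\mu(K_{\varepsilon})$, is genuinely the value of $\mu$ on an honest singular knot rather than some formal combination. Once this geometric re-reading of the difference operators is in place, the counting of $|A|$ against the degrees $n$ and $m$ is immediate.
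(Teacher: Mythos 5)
Your proof is correct. The paper itself does not argue this lemma: its entire proof is a citation to Corollary 3 of Willerton's paper, so strictly speaking you have supplied an argument where the paper supplies none. What you wrote is the standard self-contained proof, and it is essentially the argument underlying the cited result: encode a singular knot with $N$ double points as the signed sum $\sum_\varepsilon \mathrm{sign}(\varepsilon)K_\varepsilon$, observe that a linear invariant evaluated on it is the iterated finite difference $\Delta_1\cdots\Delta_N$ of $\varepsilon\mapsto v(K_\varepsilon)$, apply the discrete Leibniz rule $\Delta_i(fg)=(\Delta_i f)(E_i^+g)+(E_i^-f)(\Delta_i g)$ coordinate by coordinate, and note that each summand indexed by $A\subseteq\{1,\dots,N\}$ is a product of $\mu$ on a singular knot with $|A|$ double points and $\nu$ on one with $|A^c|$ double points, forcing $N\le n+m$ for a nonzero contribution. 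All the steps check out: the operators on distinct coordinates genuinely commute, the Leibniz identity is verified by direct expansion, and the ``geometric re-reading'' you flag as the delicate point is legitimate because partially resolving double points of a singular knot and then applying the remaining sign-alternating sum is exactly the identification of Figure \ref{singular_relation} applied to the surviving double points. The only thing your write-up buys beyond the paper is transparency; the only thing the paper's citation buys is brevity.
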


\begin{proof}
See Corollary 3 in \cite{Willerton1998}.
\end{proof}

This means that the coefficients of the product $A_{\mathcal{K}} (q^{2r\alpha}) \times F_{\infty}(q, q^{\alpha}, \mathcal{K})$ are Vassiliev invariant.

 More precisely we have:

\begin{align*}
A_{\mathcal{K}} (q^{2r\alpha}) \times F_{\infty}(q, q^{\alpha}, \mathcal{K}) &= \sum_{m,n} (\sum_{k=0}^m \tilde{\lambda}_k (r,\mathcal{K}) b_{n,m-k}(\mathcal{K})) (q^2-1)^n (q^{2\alpha}-1)^m\\
&=\sum_{m,n} c_{n,m}(r,\mathcal{K}) (q^2-1)^n (q^{2\alpha}-1)^m
\end{align*}

where $c_{n,m}= \sum_{k=0}^m \tilde{\lambda}_k b_{n,m-k}$ are Vassiliev invariant of degree $n+m$.

\paragraph{The ADO polynomial:}~
%We define the canonical projections maps onto quotient rings: \[\pi_{n,m}: \Z[[q-1, q^{\alpha}-1]] \to \frac{\Z[[q-1, q^{\alpha}-1]]}{ ( (q-1)^n (q^{\alpha}-1)^m)},\] \[j_{n,m}: \Z[\zeta_{2r}, q^{\alpha}] \to  \frac{\Z[\zeta_{2r}]}{(\zeta_{2r}-1)^n } \frac{[q^{\alpha}]}{(q^{\alpha}-1)^m}\] the canonical and the evaluation map $q \mapsto \zeta_{2r}$ \[ev_r : \frac{\Z[[q-1, q^{\alpha}-1]]}{ ( (q-1)^n (q^{\alpha}-1)^m)} \to \frac{\Z[\zeta_{2r}]}{(\zeta_{2r}-1)^n } \frac{[q^{\alpha}]}{(q^{\alpha}-1)^m}.\]

Let's now study the factorisation at roots of unity. First we need to define the evaluation map.
Let \[ \Z[[\zeta_r-1]]:= \underset{\underset{n}{\leftarrow}}{\lim} \ \Z[\zeta_r]/(\zeta_r-1)^n.\]
The natural map: \[ j : \Z[\zeta_r] \to \Z[[\zeta_r-1]] \] is injective iff $\underset{n \in \N^*}{\cap}(\zeta_r-1)^n=\{0\}.$

Moreover, we have a well defined surjective map \[ev_r : \Z[[q-1, q^{\alpha}-1]] \to \Z[[\zeta_r-1, q^{\alpha}-1]].\]

Then, one gets the equality : 
\begin{align*}
 j(ADO_r(q^{\alpha}, \mathcal{K}) ) &= ev_r( A_{\mathcal{K}} (q^{2r\alpha})  F_{\infty}(q, q^{\alpha}, \mathcal{K}))
\end{align*}

It is essential to study for which $r$ the map $j$ is injective.

%We have proved the following proposition:
%
%\begin{prop}\label{Vassiliev_prop_ADO}
%The map $\mu_{n,m}: \mathsf{K} \to  \frac{\Z[\zeta_{2r}]}{(\zeta_{r}-1)^n } \frac{[q^{\alpha}]}{(q^{2\alpha}-1)^m}$, such that for any knot $\mathcal{K}$,  $\mu_{n,m}(\mathcal{K}) = j_{n,m} (ADO_r(q^{\alpha}, \mathcal{K}) )$  is a Vassiliev invariant of degree $n+m$.
%
%\end{prop}

%
%
%Although the previous result is stated for any $r \in \N^*$, we need to study the ring $\frac{\Z[\zeta_{2r}]}{(\zeta_{r}-1)^n } \frac{[q^{\alpha}]}{(q^{2\alpha}-1)^m}$ to see whether it is trivial. 

\begin{lemma}\label{cyclotomic_lemma_inverse}
$(\zeta_{r}-1)$ is not invertible in $\Z[\zeta_{r}]$ if and only if $r$ is  a power of a prime number.
More precisely, if $r=p^l$ is a power of a prime number $((\zeta_{r}-1)^{\varphi(r)})=(r)$ as ideals in $\Z[\zeta_{r}]$, where $\varphi(r)$ is the Euler phi function.
\end{lemma}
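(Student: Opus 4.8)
The plan is to reduce the whole statement to the evaluation of the $r$-th cyclotomic polynomial $\Phi_r$ at $1$. Writing $\Phi_r(x)=\prod_{\gcd(k,r)=1}(x-\zeta_r^k)$ and specialising at $x=1$ gives
\[ \Phi_r(1)=\prod_{\gcd(k,r)=1}(1-\zeta_r^k)=N_{\Q(\zeta_r)/\Q}(1-\zeta_r). \]
First I would recall the classical value of $\Phi_r(1)$: starting from $1+x+\dots+x^{r-1}=\prod_{d\mid r,\,d>1}\Phi_d(x)$ and setting $x=1$ one gets $r=\prod_{d\mid r,\,d>1}\Phi_d(1)$, and a short induction (or a direct citation) yields $\Phi_r(1)=p$ when $r=p^l$ is a prime power and $\Phi_r(1)=1$ when $r$ has at least two distinct prime divisors.

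For the equivalence I would argue directly from the product above rather than invoking the unit theorem, so as to avoid secretly relying on $\Z[\zeta_r]$ being the maximal order. If $r$ is not a prime power, the product equals $1$, so $\prod_{k\neq 1}(1-\zeta_r^k)\in\Z[\zeta_r]$ is an explicit inverse of $1-\zeta_r$, and $\zeta_r-1$ is invertible. Conversely, if $r=p^l$ the product equals $p$, and $p$ is not a unit in $\Z[\zeta_r]$ (reducing modulo $p$ leaves the nonzero ring $\Z[\zeta_r]/(p)$, a free $\Z/p\Z$-module of rank $\varphi(r)$); since a non-unit cannot be a product of units, $\zeta_r-1$ is not invertible.

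For the refined ideal statement when $r=p^l$, the key point is that for every $k$ coprime to $r$ the elements $1-\zeta_r^k$ and $1-\zeta_r$ are associates in $\Z[\zeta_r]$: the ratio $\frac{1-\zeta_r^k}{1-\zeta_r}=1+\zeta_r+\dots+\zeta_r^{k-1}$ lies in $\Z[\zeta_r]$, and picking $k'$ with $kk'\equiv 1\pmod r$ shows that $\frac{1-\zeta_r}{1-\zeta_r^k}=1+\zeta_r^k+\dots+\zeta_r^{k(k'-1)}$ lies there too. Hence every factor generates the ideal $(\zeta_r-1)$, and the product formula $\Phi_r(1)=p$ becomes $p=u\,(\zeta_r-1)^{\varphi(r)}$ for some unit $u$, which gives the ideal equality $((\zeta_r-1)^{\varphi(r)})=(p)$, i.e. the total ramification of $p$ in $\Z[\zeta_r]$ (this is the ideal generating the $r$-adic topology, since $r=p^l$).

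I expect the main obstacle to be bookkeeping rather than depth: one must check carefully that both $\frac{1-\zeta_r^k}{1-\zeta_r}$ and its inverse genuinely lie in $\Z[\zeta_r]$ so that the associate claim is airtight, and one must keep the argument elementary (explicit inverses and the non-unit $p$) instead of appealing to the norm–unit correspondence, which would presuppose that $\Z[\zeta_r]$ is the full ring of integers. Once the value of $\Phi_r(1)$ and the associate lemma are in place, both the equivalence and the factorisation follow immediately.
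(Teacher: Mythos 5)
Your argument is correct and is essentially the paper's proof written out in full: the paper only cites Washington (Lemma~1.4 for the value of $\Phi_r(1)$ and Prop.~2.8 for total ramification), and your proposal is precisely the standard proof of those two results, so there is no real divergence of method. One point of order: in the prime-power case, the identity $p=\prod_{\gcd(k,r)=1}(1-\zeta_r^k)$ with $p$ a non-unit only tells you that \emph{some} factor is a non-unit; to conclude that $1-\zeta_r$ itself is not invertible you already need your associate lemma (all factors generate the same ideal), so that lemma should logically precede the non-invertibility claim rather than follow it. More substantively, note that what your computation actually yields is $\bigl((\zeta_r-1)^{\varphi(r)}\bigr)=(p)$, exactly as in Washington's Prop.~2.8 --- not $(r)$ as the lemma is stated. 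For $r=p^l$ with $l>1$ these ideals differ: one has $(r)=(p)^l=\bigl((\zeta_r-1)^{l\varphi(r)}\bigr)$. So your proof is the correct one and the printed statement is off by this exponent $l$ (it is accurate only for $l=1$); your parenthetical remark that $(p)$ still generates the $r$-adic topology is the right way to salvage the uses made of the lemma later in the paper.
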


\begin{proof}
See Lemma 1.4 and Prop 2.8 in \cite{washington1997introduction}.
\end{proof}

\begin{cor}~
\begin{itemize}
\item $\Z[[\zeta_r-1, q^{\alpha}-1]]$ is non trivial iff $r$ is a power of a prime number.
\item $j$ is injective iff $r$ is a power of a prime number.
\end{itemize}
\end{cor}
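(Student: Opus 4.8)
The plan is to read off both statements from Lemma \ref{cyclotomic_lemma_inverse} together with formal properties of projective limits; no new geometric input is needed. For the first bullet I would first note that the transition maps in $\Z[[\zeta_r-1]]=\varprojlim_n \Z[\zeta_r]/(\zeta_r-1)^n$ are surjective and send $1$ to $1$, so the limit is the zero ring exactly when the first quotient $\Z[\zeta_r]/(\zeta_r-1)$ vanishes, i.e.\ when $(\zeta_r-1)=\Z[\zeta_r]$. Since $1\in(\zeta_r-1)^n\subseteq(\zeta_r-1)$ forces $\zeta_r-1$ to be a unit, this is the same as $\zeta_r-1$ being invertible in $\Z[\zeta_r]$, which by Lemma \ref{cyclotomic_lemma_inverse} fails precisely when $r$ is a power of a prime. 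Hence $\Z[[\zeta_r-1]]$ is non-trivial iff $r=p^l$. The two-variable ring is handled by the same dichotomy: there is a unital ring map $\Z[[\zeta_r-1]]\to\Z[[\zeta_r-1,q^{\alpha}-1]]$ (inclusion of constants) and a unital projection back (setting $q^{\alpha}-1=0$), so the two rings are trivial or non-trivial simultaneously, giving the first bullet.

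For the second bullet I would use the identity $\ker j=\bigcap_{n\in\N^*}(\zeta_r-1)^n$ already recorded above, so injectivity of $j$ is equivalent to $\bigcap_n(\zeta_r-1)^n=\{0\}$. If $r$ is not a prime power, then $\zeta_r-1$ is a unit, every $(\zeta_r-1)^n$ equals $\Z[\zeta_r]$, and the intersection is all of $\Z[\zeta_r]$; thus $j$ is not injective (its target is even the zero ring). If $r=p^l$, I would invoke the ideal equality $((\zeta_r-1)^{\varphi(r)})=(r)$ from Lemma \ref{cyclotomic_lemma_inverse}: taking $k$-th powers of ideals gives $((\zeta_r-1)^{k\varphi(r)})=(r^k)$, and therefore $\bigcap_n(\zeta_r-1)^n\subseteq\bigcap_k r^k\Z[\zeta_r]$.

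It then remains to check $\bigcap_k r^k\Z[\zeta_r]=\{0\}$, and this is the only point where a genuine (if elementary) argument is required. The ring $\Z[\zeta_r]$ is a free $\Z$-module of finite rank $\varphi(r)$, so, fixing an integral basis, any element of the intersection has every coordinate divisible by $r^k$ for all $k$, hence every coordinate equal to $0$. Combining the two inclusions yields $\bigcap_n(\zeta_r-1)^n=\{0\}$ and thus the injectivity of $j$ when $r$ is a prime power. The main (and only) obstacle is this finiteness step, which is exactly where the prime-power hypothesis enters through $((\zeta_r-1)^{\varphi(r)})=(r)$; everything else is formal bookkeeping with the projective systems.
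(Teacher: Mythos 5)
Your proof is correct and follows exactly the route the paper intends: the corollary is stated without proof as an immediate consequence of Lemma~\ref{cyclotomic_lemma_inverse}, and you have simply made the deduction explicit (triviality of the inverse limit $\Leftrightarrow$ invertibility of $\zeta_r-1$, and $\bigcap_n(\zeta_r-1)^n\subseteq\bigcap_k r^k\Z[\zeta_r]=\{0\}$ via the ideal equality $((\zeta_r-1)^{\varphi(r)})=(r)$ and the finite rank of $\Z[\zeta_r]$ over $\Z$). No gaps; the filled-in details, including the retraction argument for the two-variable ring, are all sound.
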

%%%%% GIVE PROOF SECOND POINT.
\bigskip

Let us suppose now that $r=p^l$ is a power of a prime number, we have $j:\Z[\zeta_r] \subset \Z[[\zeta_r-1, q^{\alpha}-1]] $, we will omit the $j$.

Hence we have the following equality in $\Z[[\zeta_r-1, q^{\alpha}-1]]$: 
\[ADO_r(q^{\alpha}, \mathcal{K})= \sum_{m \geq 0} \sum_{n \geq 0} c_{n,m}(r,\mathcal{K}) (\zeta_{r} -1 )^n (q^{2\alpha}-1)^m\] and $c_{n,m}(r,\mathcal{K})$ is a Vassiliev invariant of degree $n+m$.

Thus we have the following theorem.

\begin{thm}\label{thm_ado_Vassiliev}
Let $\mathcal{K}$ be a $0$ framed knot in $S^3$ and $r=p^l$ be a power of a prime number.

There exist Vassiliev invariants $c_{n,m}(r,\mathcal{K})$ of degree\textsuperscript{\hyperlink{NB_degree}{1}} $n+m$, such that we can write \[ADO_r(q^{\alpha}, \mathcal{K}) = \sum_{m \geq 0} \sum_{n \geq 0} c_{n,m}(r,\mathcal{K}) (\zeta_r-1)^n (q^{2\alpha}-1)^m  \] in $\Z[[\zeta_r-1, q^{\alpha}-1]]$.

This means that the $ADO$ polynomials $ADO_r(q^{\alpha}, \mathcal{K}) \in \Z[\zeta_r, q^{\alpha}]$ are topological Vassiliev invariants for the filtration $((\zeta_r-1)^n (q^{\alpha}-1)^m)_{n,m\in\N*}$.

\end{thm}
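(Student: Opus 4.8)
The plan is to assemble the ingredients established in the preceding paragraphs: the theorem is essentially a repackaging of those intermediate results, so the task is to combine them correctly and to justify the passage to the root of unity. First I would record the product expansion obtained just above. By Lemma~\ref{lemma_Vassiliev_prod} applied to the Vassiliev coefficients $b_{n,m}$ of Proposition~\ref{Vassiliev_coeff_unified} and the coefficients $\tilde{\lambda}_m$ of Lemma~\ref{Vassiliev_lemma_alexander_variable}, one has in $\Z[[q-1,q^{\alpha}-1]]$
\[
A_{\mathcal{K}}(q^{2r\alpha})\cdot F_{\infty}(q,q^{\alpha},\mathcal{K}) = \sum_{n,m\geq 0} c_{n,m}(r,\mathcal{K})\,(q^2-1)^n (q^{2\alpha}-1)^m,
\]
where $c_{n,m}=\sum_{k=0}^m \tilde{\lambda}_k\, b_{n,m-k}$ is a Vassiliev invariant of degree $n+m$.

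Next I would apply the evaluation $ev_r$ at $q=\zeta_{2r}$, under which $q^2\mapsto \zeta_r$ and hence $(q^2-1)^n\mapsto(\zeta_r-1)^n$, while the integer-valued coefficients $c_{n,m}$ are untouched. By Theorem~\ref{thm_unified_ado_jones} together with the identity $j(ADO_r(q^{\alpha},\mathcal{K}))=ev_r\big(A_{\mathcal{K}}(q^{2r\alpha})F_{\infty}(q,q^{\alpha},\mathcal{K})\big)$ recorded above, the left-hand side becomes $j(ADO_r(q^{\alpha},\mathcal{K}))$. This yields
\[
j\big(ADO_r(q^{\alpha},\mathcal{K})\big)=\sum_{n,m\geq 0} c_{n,m}(r,\mathcal{K})\,(\zeta_r-1)^n(q^{2\alpha}-1)^m
\]
in $\Z[[\zeta_r-1,q^{\alpha}-1]]$. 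Here the crucial hypothesis enters: since $r=p^l$ is a prime power, the Corollary to Lemma~\ref{cyclotomic_lemma_inverse} guarantees that $(\zeta_r-1)$ is not invertible, that the target ring is non-trivial, and that $j$ is injective. Injectivity lets me drop $j$ and identify $ADO_r$ with the displayed series, giving the desired expansion with Vassiliev coefficients of degree $n+m$.

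Finally, for the topological statement, take a singular knot $K\in\mathsf{K}_d$. Each $c_{n,m}$ is Vassiliev of degree $n+m$, so $c_{n,m}(K)=0$ whenever $n+m<d$; consequently $ADO_r(q^{\alpha},K)$ lies in the ideal generated by the $(\zeta_r-1)^n(q^{\alpha}-1)^m$ with $n+m\geq d$ (using that $(q^{\alpha}-1)\mid(q^{2\alpha}-1)$, so powers of $(q^{2\alpha}-1)$ sit inside the stated filtration). Letting $d\to\infty$ pushes the image of $\mathsf{K}_d$ arbitrarily deep into the filtration, which is exactly the definition of a topological Vassiliev invariant for the $((\zeta_r-1)^n(q^{\alpha}-1)^m)_{n,m\in\N^*}$ topology.

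The main obstacle is the injectivity/non-triviality point rather than any computation: if $\zeta_r-1$ were invertible the completion would collapse to $0$ and the whole expansion would be vacuous. This is why the prime-power hypothesis, encapsulated in Lemma~\ref{cyclotomic_lemma_inverse}, carries the true content of the argument; the remaining steps are bookkeeping with the multiplicativity of Vassiliev invariants and the compatibility of the maps $j$ and $ev_r$.
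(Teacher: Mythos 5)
Your proposal follows essentially the same route as the paper: form the product $A_{\mathcal{K}}(q^{2r\alpha})\,F_{\infty}(q,q^{\alpha},\mathcal{K})$ with coefficients $c_{n,m}=\sum_k \tilde{\lambda}_k b_{n,m-k}$ made Vassiliev of degree $n+m$ via Lemma~\ref{lemma_Vassiliev_prod}, push through $ev_r$, and use the injectivity of $j$ for prime-power $r$ (Lemma~\ref{cyclotomic_lemma_inverse}) to identify the result with $ADO_r$ in $\Z[[\zeta_r-1,q^{\alpha}-1]]$. The concluding filtration argument is also the intended one, so the proof is correct and matches the paper's.
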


Nevertheless, this form is not unique and happens in the completion ring \(\Z[[\zeta_r-1, q^{\alpha}-1]].\)

\paragraph{The $r$-adic form:}
We can compute a unique expansion for the ADO polynomials with $r$-adic topological invariants.

Let $\varphi$ be the Euler phi function.\\ For a power of a prime $r=p^l$, we have \[\varphi(r)=(p-1)p^{l-1}. \]

The ADO polynomial can be uniquely written as:
\[ADO_r(q^{\alpha}, \mathcal{K})= \sum_{m \geq 0} \sum_{n=0}^{\varphi(r)-1} d_{n,m}(r,\mathcal{K}) (\zeta_{r} -1 )^n (q^{2\alpha}-1)^m\]
where $d_{n,m} \in \Z$.

This comes from the fact that since $1, \zeta_{r}, \dots, \zeta_{r}^{\varphi(r)-1}$ form a basis of $\Z[\zeta_{r}]$ as a $\Z$ module, so is $1,  \zeta_{r}-1, \dots, (\zeta_{r}-1)^{\varphi(r)-1}$.

Let's now define $\Z_r := \underset{\underset{n}{\leftarrow}}{\lim} \ \Z/(r)^n$ the ring of $r$ adic integers.

Since $((\zeta_r-1)^{n(\varphi(r)-1)})=(r^n)$, we have an injective map \[i: \Z[[\zeta_r-1, q^{\alpha}-1]] \to \Z_r[\zeta_r] [[q^{\alpha}-1]].\]

Now, if we write 
\begin{align*}
ADO_r(q^{\alpha}, \mathcal{K}) &=\sum_{m,n \geq 0} c_{n,m}(r,\mathcal{K}) (\zeta_r-1)^n (q^{2\alpha}-1)^m\\
&= \sum_{m \geq 0} \sum_{i=0}^{\varphi(r)-1} \sum_{j \geq 0} CL_{j,i,m}(r,\mathcal{K}) r^j (\zeta_r-1)^i (q^{2\alpha}-1)^m \end{align*} 
where $CL_{j,i,m}(r,\mathcal{K})$ is a linear combination of $(C_{n,m}(r,\mathcal{K}))_n$.

Since, $\forall n \geq (j+1)\varphi(r)$, $C_{n,m}(r,\mathcal{K})(\zeta_r-1)^n \in r^{j+1} \Z[\zeta_r]$, then $CL_{j,i,m}(r,\mathcal{K})$ is a linear combination of $(C_{n,m}(r,\mathcal{K}))_{n \sinf (j+1)\varphi(r)}$. 

Hence, $CL_{j,i,m}(r,\mathcal{K})$ is a Vassiliev invariant of degree $(j+1)\varphi(r)+m$. 

\begin{thm}\label{thm_ado_Vassiliev_radic}
Let $\mathcal{K}$ be a $0$ framed knot in $S^3$ and $r=p^l$ a power of a prime number.

We can write in a unique way \[ADO_r(q^{\alpha}, \mathcal{K}) = \sum_{m \geq 0} \sum_{n=0}^{\varphi(r)-1} d_{n,m}(r,\mathcal{K}) (\zeta_r-1)^n (q^{2\alpha}-1)^m  \] in $\Z[\zeta_r, q^{\alpha}]$,
and we have  
\begin{enumerate}
\item $d_{n,m}(r,\mathcal{K}) \mod r^j$ is a Vassiliev invariant of degree\textsuperscript{\hyperlink{NB_degree}{1}} $(j+1)\varphi(r)+m$.
\item In consequence, $d_{n,m}(r,\mathcal{K}) \in \Z$ and $d_{n,m}(r,\mathcal{K})$ is a $r$-adic topological Vassiliev invariant.
\item Thus, $ADO_r(q^{\alpha}, \mathcal{K})\in \Z[\zeta_r, q^{\alpha}]$ is a topological Vassiliev invariant for the filtration $(r^j (q^{\alpha}-1)^m)_{j,m \in \N}$ topology.
\end{enumerate}

\end{thm}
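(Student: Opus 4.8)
The statement collects the consequences of the two expansions already obtained, so the plan is mainly to organize the bookkeeping carefully rather than to prove something genuinely new. First I would settle existence and uniqueness of the integral expansion. Since $ADO_r(q^\alpha,\mathcal{K})\in\Z[\zeta_r,q^\alpha]$ is a Laurent polynomial in $q^{2\alpha}$ with coefficients in $\Z[\zeta_r]$, expanding each $q^{\pm 2\alpha}$ as a power series in $q^{2\alpha}-1$ (using the remark $q^{-\alpha}=\sum_{m\ge0}(1-q^\alpha)^m$) produces a power series $\sum_m e_m(\mathcal{K})(q^{2\alpha}-1)^m$ with $e_m(\mathcal{K})\in\Z[\zeta_r]$. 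Because $1,\zeta_r-1,\dots,(\zeta_r-1)^{\varphi(r)-1}$ is a $\Z$-basis of $\Z[\zeta_r]$, each $e_m$ decomposes uniquely as $\sum_{n=0}^{\varphi(r)-1}d_{n,m}(r,\mathcal{K})(\zeta_r-1)^n$ with $d_{n,m}\in\Z$. This yields the unique expansion and already the integrality half of point~2.

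The heart of the argument is to reconcile this finite integral expansion with the infinite expansion $\sum_{m,n\ge0}c_{n,m}(\zeta_r-1)^n(q^{2\alpha}-1)^m$ coming from Theorem~\ref{thm_ado_Vassiliev}. Both represent $ADO_r$ already in $\Z[[\zeta_r-1,q^\alpha-1]]$, so I would push them through the injective map $i$ into $\Z_r[\zeta_r][[q^\alpha-1]]$ and compare. Regrouping the $c$-expansion by powers of $r$ via $((\zeta_r-1)^{\varphi(r)})=(r)$ gives $\sum_m\sum_{i=0}^{\varphi(r)-1}\sum_{j\ge0}CL_{j,i,m}\,r^j(\zeta_r-1)^i(q^{2\alpha}-1)^m$, where $CL_{j,i,m}$ is the $\Z$-linear combination of the $(c_{n,m})_{n\sinf(j+1)\varphi(r)}$ computed before the theorem, hence a Vassiliev invariant of degree $(j+1)\varphi(r)+m$. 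Since $\{(\zeta_r-1)^i\}_{i=0}^{\varphi(r)-1}$ is a $\Z_r$-basis of $\Z_r[\zeta_r]$ and coefficients of a power series in $q^{2\alpha}-1$ are unique, comparing with the image of the integral expansion forces the $r$-adic identity $d_{n,m}=\sum_{j\ge0}CL_{j,n,m}\,r^j$ in $\Z_r$. Reducing modulo $r^j$ gives $d_{n,m}\equiv\sum_{k=0}^{j-1}CL_{k,n,m}\,r^k \pmod{r^j}$; as reduction modulo $r^j$ is a group homomorphism and multiplication by the constant $r^k$ preserves Vassiliev degree, the right-hand side is a $\Z/r^j\Z$-valued Vassiliev invariant whose top surviving term $CL_{j-1,n,m}r^{j-1}$ has degree $j\varphi(r)+m\le(j+1)\varphi(r)+m$, proving point~1.

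Points~2 and~3 then follow formally. For point~2, a Vassiliev invariant of degree $D$ vanishes on $\mathsf{K}_{D+1}$, so $d_{n,m}\bmod r^j$ vanishes on $\mathsf{K}_{N}$ whenever $N\ssup(j+1)\varphi(r)+m$; since this holds for every $j$, it says exactly that for each neighborhood $r^j\Z_r$ of $0$ there is an $N$ with $d_{n,m}(\mathsf{K}_N)\subset r^j\Z_r$, i.e.\ $d_{n,m}\colon\mathsf{K}\to\Z$ is an $r$-adic topological Vassiliev invariant. For point~3, a basic neighborhood of $0$ for the $(r^j(q^\alpha-1)^m)_{j,m}$ filtration is determined by the finitely many coefficients $d_{i,m'}\bmod r^j$ with $0\le i\le\varphi(r)-1$ and $m'\sinf m$; each is a Vassiliev invariant of degree at most $(j+1)\varphi(r)+m$, so all of them vanish simultaneously on $\mathsf{K}_N$ for $N\ssup(j+1)\varphi(r)+m$, placing $ADO_r(q^\alpha,\mathcal{K})$ in that neighborhood and exhibiting it as a topological Vassiliev invariant for this filtration.

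The main obstacle is the middle step: one must be sure that regrouping the $c$-expansion by powers of $r$ is legitimate and genuinely compatible with the integral expansion. This rests on the injectivity of $i$, on uniqueness of the $\Z_r$-basis decomposition inside the completion $\Z_r[\zeta_r][[q^\alpha-1]]$, and crucially on the finiteness observation that $CL_{j,i,m}$ only involves the $c_{n,m}$ with $n\sinf(j+1)\varphi(r)$. That finiteness is what keeps the degree bounded, and it is the single place where the arithmetic of $\Z[\zeta_r]$, namely the relation $((\zeta_r-1)^{\varphi(r)})=(r)$ from Lemma~\ref{cyclotomic_lemma_inverse}, really enters.
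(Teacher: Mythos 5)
Your proposal is correct and follows essentially the same route as the paper: the $\Z$-basis $1,\zeta_r-1,\dots,(\zeta_r-1)^{\varphi(r)-1}$ for uniqueness, the regrouping of the $c_{n,m}$-expansion into the $CL_{j,i,m}$ with the finiteness bound $n\sinf(j+1)\varphi(r)$, the identification $d_{n,m}\equiv\sum_{k\sinf j}CL_{k,n,m}r^k \pmod{r^j}$ via uniqueness in $\Z_r[\zeta_r][[q^{\alpha}-1]]$, and the formal deduction of points 2 and 3. You in fact spell out the comparison step more explicitly than the paper does, and your degree bound $j\varphi(r)+m$ is marginally sharper than the stated $(j+1)\varphi(r)+m$, which is harmless under the paper's ``degree $d$ or less'' convention.
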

\medskip

\begin{proof}
\begin{enumerate}
\item $d_{n,m}(r,\mathcal{K})= \sum_{l=0}^{j-1} CL_{l,n,m}(r,\mathcal{K}) r^l \mod r^j$ by the uniqueness of the decomposition, hence is a Vassiliev invariant of degree $(j+1)\varphi(r)+m$.
\item  Let $m\in\N$ and $j \in \N$, if $K \in \mathsf{K}_{d}$ for $d \ssup (j+1)\varphi(r)+m$, $d_{n,m}(r,K) \in r^j \Z$, hence $d_{n,m}(r,K)$ is an $r$-adic topological Vassiliev invariant.
\item Let $m,j \in \N$, since  $\forall K \in \mathsf{K}_{d}$ for $d \ssup (j+1)\varphi(r)+m$, $\forall l\leq m$ $d_{n,l}(r,K) \in r^j \Z$, one gets \[ADO_r(q^{\alpha},K) \in r^j \Z[\zeta_r, q^{\alpha}]+ (q^{\alpha}-1)^m \Z[\zeta_r, q^{\alpha}].\]
\end{enumerate}
\end{proof}

Here we got a unique expansion, but the price to pay is that the coefficients are, a priori, no longer Vassiliev invariant but $r$ adic topological Vassiliev invariants.

\subsection{Study of $\overline{d_{n,m}(r,\mathcal{K})}$ and ADO asymptotic behavior mod r}
Lets focus on the previous setup but modulo $r$. We will study the dependence of $c_{n,m}$ and $d_{n,m}$ in $r$, and deduce some asymptotic behavior of ADO modulo $r$.

In the ring $(\Z[\zeta_{2r}]/(r))[[q^{\alpha}-1]]$ we have the following equality:
\[\sum_{m \geq 0} \sum_{n=0}^{\varphi(r)-1} \overline{c_{n,m}(\mathcal{K})} (\zeta_{r}-1)^n (q^{2\alpha}-1)^m = \sum_{m \geq 0} \sum_{n=0}^{\varphi(r)-1} \overline{d_{n,m}(r,\mathcal{K})} (\zeta_{r} -1 )^n (q^{2\alpha}-1)^m \]

where $\overline{c_{n,m}}, \overline{d_{n,m}} \in \Z / r\Z$.

\begin{prop}\label{thm_ADO_Vassiliev_r}
Let $r=p^l$ be a power of a prime number.\\
We can write in a unique way \[ADO_r(q^{\alpha}, \mathcal{K})= \sum_{m \geq 0} \sum_{n=0}^{\varphi(r)-1} d_{n,m}(r,\mathcal{K}) (\zeta_{r} -1 )^n (q^{2\alpha}-1)^m\]
And, for any $m \in \N$ and any $1 \leq n \leq \varphi(r)-1$, the map
\begin{align*}
 \nu_{n,m} : \mathsf{K} &\to \Z/r\Z\\
 \mathcal{K} &\mapsto \overline{d_{n,m}(r,\mathcal{K})}
 \end{align*}
 
 is a Vassiliev invariant of degree\textsuperscript{\hyperlink{NB_degree}{1}} $n+m$.

\end{prop}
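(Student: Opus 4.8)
The plan is to reduce the whole identity modulo $r$ and match it, coefficient by coefficient, against the Vassiliev expansion with the integers $c_{n,m}(r,\mathcal{K})$ already at hand. Recall that in $\Z[[\zeta_r-1,q^{\alpha}-1]]$ one has the (non-unique) expansion $ADO_r(q^{\alpha},\mathcal{K})=\sum_{m,n\ge 0} c_{n,m}(r,\mathcal{K})(\zeta_r-1)^n(q^{2\alpha}-1)^m$ with each $c_{n,m}(r,\cdot)$ a $\Z$-valued Vassiliev invariant of degree $n+m$ (Theorem \ref{thm_ado_Vassiliev}). By Lemma \ref{cyclotomic_lemma_inverse} we have $(\zeta_r-1)^{\varphi(r)}\equiv 0 \pmod r$, so reducing this expansion modulo $r$ kills every term with $n\ge\varphi(r)$ and leaves, in $(\Z[\zeta_r]/(r))[[q^{\alpha}-1]]$, exactly the truncated identity displayed just before the statement, namely $\sum_{m}\sum_{n=0}^{\varphi(r)-1}\overline{c_{n,m}}\,(\zeta_r-1)^n(q^{2\alpha}-1)^m=\sum_{m}\sum_{n=0}^{\varphi(r)-1}\overline{d_{n,m}}\,(\zeta_r-1)^n(q^{2\alpha}-1)^m$.

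First I would extract from this a pointwise identity between the coefficients. Comparing the coefficients of $(q^{2\alpha}-1)^m$ on both sides --- legitimate because the powers of $(q^{2\alpha}-1)$ form a topological basis of the power series ring over the coefficient ring $\Z[\zeta_r]/(r)$ --- gives, for each $m$, an equality $\sum_{n=0}^{\varphi(r)-1}\overline{c_{n,m}}(\zeta_r-1)^n=\sum_{n=0}^{\varphi(r)-1}\overline{d_{n,m}}(\zeta_r-1)^n$ in $\Z[\zeta_r]/(r)$. Since $1,\zeta_r-1,\dots,(\zeta_r-1)^{\varphi(r)-1}$ is a $\Z$-basis of $\Z[\zeta_r]$, its reduction is a $\Z/r\Z$-basis of $\Z[\zeta_r]/(r)$; matching coordinates in this basis yields $\overline{d_{n,m}(r,\mathcal{K})}=\overline{c_{n,m}(r,\mathcal{K})}$ in $\Z/r\Z$ for every $m$ and every $0\le n\le\varphi(r)-1$, in particular on the range $1\le n\le\varphi(r)-1$ claimed in the statement.

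It then remains to transport the Vassiliev property through the reduction map, which is the formal last step. Reduction $\pi:\Z\to\Z/r\Z$ is a group homomorphism, so $\nu_{n,m}=\pi\circ c_{n,m}(r,\cdot)$ is $\Z/r\Z$-linear on $\mathsf{K}$, and since $c_{n,m}(r,\cdot)$ vanishes on $\mathsf{K}_{n+m+1}$ so does $\nu_{n,m}$. Hence $\nu_{n,m}:\mathcal{K}\mapsto\overline{d_{n,m}(r,\mathcal{K})}$ is a $\Z/r\Z$-valued Vassiliev invariant of degree $n+m$, as required. (The uniqueness of the $d$-expansion is already supplied by Theorem \ref{thm_ado_Vassiliev_radic}.)

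The main obstacle is the truncation underlying the first paragraph: it is the step where one must be certain that, modulo $r$, the full $c$-expansion really collapses onto the finite basis $\{(\zeta_r-1)^n\}_{0\le n<\varphi(r)}$ with the \emph{same} retained coefficients $\overline{c_{n,m}}$. This is precisely where the ramification of $p$ in $\Z[\zeta_r]$ enters through Lemma \ref{cyclotomic_lemma_inverse}: one has to control how the high powers $(\zeta_r-1)^n$ with $n\ge\varphi(r)$ behave modulo $r$, and confirm that they contribute nothing to the coordinates indexed by $0\le n<\varphi(r)$. Everything downstream --- coordinate matching in a fixed integral basis, and functoriality of the Vassiliev condition under $\pi$ --- is purely formal; the only secondary point to double-check is the freeness of $((q^{2\alpha}-1)^m)_m$ over $\Z[\zeta_r]/(r)$ that licenses the coefficient comparison, which holds since this family is a topological basis of the completed power series ring.
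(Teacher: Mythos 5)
Your argument is correct and is essentially the paper's own (the paper states this proposition with no explicit proof, relying on the displayed mod-$r$ identity just before it): reduce the $c$-expansion modulo $r$, use $((\zeta_r-1)^{\varphi(r)})=(r)$ to truncate to $n<\varphi(r)$, compare coordinates in the basis $1,\zeta_r-1,\dots,(\zeta_r-1)^{\varphi(r)-1}$ of $\Z[\zeta_r]/(r)$ over $\Z/r\Z$ to get $\overline{d_{n,m}}=\overline{c_{n,m}}$, and push the Vassiliev property of $c_{n,m}$ through the reduction map. The only point where your justification is slightly glib is calling $((q^{2\alpha}-1)^m)_m$ a topological basis over $\Z[\zeta_r]/(r)$ when $p=2$ (there $q^{\alpha}+1$ is not a unit, so this family does not span), but linear independence --- which is all the coefficient comparison needs --- still holds, so the proof goes through.
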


We may now study the dependence of $d_{n,m}(r,\mathcal{K})$ in the variable $r$.

Recall that : \[F_{\infty}(q, q^{\alpha}, \mathcal{K})= \sum_{n,m \geq 0} b_{n,m}(\mathcal{K}) (q^2-1)^n (q^{2\alpha}-1)^m,\]
\[ A_{\mathcal{K}} (q^{2r\alpha}) = \sum_{m} \lambda_m (\mathcal{K}) (q^{2r \alpha} -1)^m = \sum_{m} \tilde{\lambda}_m (\mathcal{K}) (q^{ 2\alpha} -1)^m.\]

\begin{rem} $b_{n,m}(\mathcal{K})$ and $\lambda_m (\mathcal{K})$ don't depend on $r$, but $\tilde{\lambda}_m (r,\mathcal{K})$ does.
\end{rem}

Let's compute $\tilde{\lambda}_m (r,\mathcal{K})$:

\begin{align*}
A_{\mathcal{K}} (q^{2r\alpha})  &= \sum_{m \geq 0} \lambda_m (\mathcal{K}) (q^{2r \alpha} -1)^m\\
								&= \sum_{m \geq 0} \lambda_m \sum_{k \geq 0} \binom{m}{k} (-1)^{m-k} q^{2rk \alpha}\\
								&= \sum_{m \geq 0} \lambda_m \sum_{k \geq 0} \binom{m}{k} (-1)^{m-k} \sum_{j \geq 0} \binom{rk}{j} (q^{2\alpha}-1)^j \\
								&= \sum_{j \geq 0} (\sum_{m \geq 0} \lambda_m \sum_{k \geq 0} \binom{m}{k} (-1)^{m-k}  \binom{rk}{j}) (q^{2\alpha}-1)^j
\end{align*} 

\begin{lemma}
$\forall m \ssup j$, $\sum_{k \geq 0} \binom{m}{k} (-1)^{m-k}  \binom{rk}{j}=0$.
\end{lemma}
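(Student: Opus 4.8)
The plan is to recognize the alternating binomial sum as an iterated finite difference operator applied to a single polynomial, and then invoke the fact that such an operator kills polynomials of low degree. Concretely, set $f(k) = \binom{rk}{j}$ and let $\Delta$ denote the forward difference operator $(\Delta g)(k) = g(k+1) - g(k)$. The standard identity for iterated differences reads $(\Delta^m f)(0) = \sum_{k=0}^{m} \binom{m}{k}(-1)^{m-k} f(k)$, and since $\binom{m}{k} = 0$ for $k > m$ this is precisely the sum in the statement. So the whole problem reduces to showing $(\Delta^m f)(0) = 0$.

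The key point is then a degree count. Writing $\binom{rk}{j} = \frac{1}{j!}\prod_{i=0}^{j-1}(rk - i)$ exhibits $f$ as a polynomial in the variable $k$ of degree exactly $j$. I would then use the elementary fact that $\Delta$ lowers the degree of a nonconstant polynomial by one (and sends constants to $0$), so that $\Delta^m$ annihilates every polynomial of degree strictly less than $m$. Since the hypothesis is $m > j = \deg f$, i.e.\ $m \geq j+1$, we get $\Delta^m f \equiv 0$ and in particular $(\Delta^m f)(0) = 0$, which is exactly what we want.

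I do not expect a genuine obstacle here; the only step needing a word of justification is the finite-difference annihilation property, which follows by a one-line induction on $m$ from the claim that $\deg(\Delta g) = \deg(g) - 1$ (the leading terms cancel). If one prefers to avoid the difference-operator language entirely, the same conclusion can be reached by the generating-function identity $\sum_{k} \binom{m}{k}(-1)^{m-k} x^k = (x-1)^m$: expanding $\binom{rk}{j}$ as a $\mathbb{Q}$-linear combination of the monomials $1, k, \dots, k^j$ and using that $\sum_k \binom{m}{k}(-1)^{m-k} k^d = 0$ for every $d < m$ (obtained by applying the operator $\left(x\frac{d}{dx}\right)^d$ to $(x-1)^m$ and evaluating at $x=1$) yields the vanishing term by term. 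Either route is routine once the reduction to a degree-$j$ polynomial is in place.
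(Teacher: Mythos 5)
Your proof is correct, and it takes a genuinely different (though closely related) route from the paper's. The paper works with the generating polynomial $F_0(X,m)=\sum_k \binom{m}{k}(-1)^{m-k}X^{rk}=(X^r-1)^m$ and observes that applying $\frac{1}{j!}\frac{d^j}{dX^j}$ to the monomial $X^{rk}$ produces exactly $\binom{rk}{j}X^{rk-j}$, so the desired sum is $\frac{1}{j!}F_0^{(j)}(1,m)$, which vanishes for $j<m$ because $(X^r-1)^m$ has a zero of order $m$ at $X=1$. You instead read the sum as the $m$-th forward difference $(\Delta^m f)(0)$ of $f(k)=\binom{rk}{j}$, a polynomial of degree $j$ in $k$, and invoke the fact that $\Delta^m$ kills polynomials of degree less than $m$. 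Both arguments are complete and elementary. The paper's version avoids any decomposition of $\binom{rk}{j}$ into monomials because the single $X$-derivative operator produces the binomial coefficient in one step; your finite-difference version is perhaps the more standard combinatorial reflex and makes the degree count (the real content of the lemma) most transparent. Your second, generating-function variant with $\left(x\frac{d}{dx}\right)^d$ is essentially the paper's idea transported through the monomial decomposition, so it sits between the two. No gaps in any of the three routes.
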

\begin{proof}
Let $F_j(X,m)= \sum_{k \geq 0} \binom{m}{k} (-1)^{m-k} \binom{rk}{j} X^{rk-j}$, we study the $X$ derivatives: 
\begin{align*}
\frac{F_0^{(n)}(X,m)}{(n)!}&=\sum_{k \geq 0} \binom{m}{k} (-1)^{m-k} \frac{(rk)!}{(rk-n)! (n)!} X^{rk-n}\\
		&= F_n(X,m).
\end{align*} 

Since \[F_0(X,m)= \sum_{k \geq 0} \binom{m}{k} (-1)^{m-k} X^{rk} = (X^r -1)^m,\] then \[F_0^{(j)}(1,m) =0 \ \forall m \ssup j \]

Hence, $\forall m \ssup j$, $F_j(1,m)=0$.
\end{proof}

We can thus write $\tilde{\lambda}_j (r,\mathcal{K})= \sum_{m \geq 0} \lambda_m \sum_{k \geq 0} \binom{m}{k} (-1)^{m-k}  \binom{rk}{j}$.

In particular, $\tilde{\lambda}_j (r,\mathcal{K}) \in r \Z$, $ \forall 0 \sinf j \sinf r$.

%\medskip
%We define the $(q^{\alpha}-1)$ valuation $\Val_{q^{\alpha}-1}: R[[q^{\alpha}-1]]$ for any ring $R$, as \[\Val_{q^{\alpha}-1}(\sum_m a_m (q^{\alpha}-1)^m)=\min(m \in \N | \  a_m \neq 0).\]

\begin{prop}\label{thm_asymptotic_ado}
Let $r=p^l$ be a power of a prime number, $\mathcal{K}$ a $0$ framed knot in $S^3$.
\begin{enumerate}
\item $\forall n \in \N$, $ \forall m \sinf r$, $\overline{d_{n,m}(r,\mathcal{K})}= \overline{b_{n,m}(\mathcal{K})}$ in $\Z/rZ$,
\item $\Val_{q^{\alpha}-1}(\overline{F_{\infty}(\zeta_r,q^{\alpha},\mathcal{K})- ADO_r(q^{\alpha}, \mathcal{K})}) \underset{r \to +\infty}{\longrightarrow} +\infty$ where \[\Val_{q^{\alpha}-1}(\sum_m a_m (q^{\alpha}-1)^m)=\min(m \in \N | \  a_m \neq 0).\]
\end{enumerate}

\end{prop}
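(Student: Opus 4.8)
The plan is to prove part~1 first---the congruence $\overline{d_{n,m}(r,\mathcal{K})}=\overline{b_{n,m}(\mathcal{K})}$ for $m<r$---and then read off part~2 as an immediate corollary. I would start from the two descriptions of the same element of $\Z[[\zeta_r-1,q^{\alpha}-1]]$, namely $ADO_r(q^{\alpha},\mathcal{K})=\sum_{n,m}c_{n,m}(r,\mathcal{K})(\zeta_r-1)^n(q^{2\alpha}-1)^m=\sum_{m}\sum_{n=0}^{\varphi(r)-1}d_{n,m}(r,\mathcal{K})(\zeta_r-1)^n(q^{2\alpha}-1)^m$, together with the formula $c_{n,m}=\sum_{k=0}^{m}\tilde{\lambda}_k(r,\mathcal{K})b_{n,m-k}(\mathcal{K})$ and the normalisation $\tilde{\lambda}_0=A_{\mathcal{K}}(1)=1$.

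The proof of part~1 splits into two steps. The first compares $c$ to $b$ modulo $r$: writing $c_{n,m}=b_{n,m}+\sum_{k=1}^{m}\tilde{\lambda}_k(r,\mathcal{K})b_{n,m-k}$ and using the fact, established just above, that $\tilde{\lambda}_k(r,\mathcal{K})\in r\Z$ for all $0<k<r$, every correction term with $1\le k\le m<r$ vanishes modulo $r$, so $\overline{c_{n,m}}=\overline{b_{n,m}}$ in $\Z/r\Z$ whenever $m<r$. The second step identifies $\overline{d_{n,m}}$ with $\overline{c_{n,m}}$. By Lemma~\ref{cyclotomic_lemma_inverse} we have $((\zeta_r-1)^{\varphi(r)})=(r)$, so $(\zeta_r-1)^{\varphi(r)}=r\,u$ for a unit $u\in\Z[\zeta_r]$, and hence $(\zeta_r-1)^n\equiv 0\pmod{r}$ for every $n\ge\varphi(r)$. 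Reducing the $c$-expansion modulo $r$ therefore kills all terms with $n\ge\varphi(r)$, collapsing $\sum_{n\ge0}\overline{c_{n,m}}(\zeta_r-1)^n$ onto $\sum_{n=0}^{\varphi(r)-1}\overline{c_{n,m}}(\zeta_r-1)^n$ in $(\Z/r\Z)[\zeta_r]$. Comparing with the reduction of the finite $d$-expansion and using that $1,\zeta_r-1,\dots,(\zeta_r-1)^{\varphi(r)-1}$ is a $\Z/r\Z$-basis of $\Z[\zeta_r]/r\Z[\zeta_r]$, I read off $\overline{d_{n,m}}=\overline{c_{n,m}}$ for each $0\le n\le\varphi(r)-1$ and every $m$. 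Chaining the two steps yields $\overline{d_{n,m}}=\overline{b_{n,m}}$ for $m<r$, which is part~1.

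For part~2 I would use part~1 directly. Modulo $r$, the expansions of $F_{\infty}(\zeta_r,q^{\alpha},\mathcal{K})=\sum_{n,m}b_{n,m}(\zeta_r-1)^n(q^{2\alpha}-1)^m$ and of $ADO_r(q^{\alpha},\mathcal{K})$ agree coefficientwise in $(\zeta_r-1)^n$ for all $m<r$, so every $(q^{2\alpha}-1)^m$-term with $m<r$ cancels in the difference, whence $\overline{F_{\infty}(\zeta_r,q^{\alpha},\mathcal{K})-ADO_r(q^{\alpha},\mathcal{K})}$ lies in $(q^{2\alpha}-1)^r\,(\Z[\zeta_r]/r)[[q^{2\alpha}-1]]$. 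Since $(q^{2\alpha}-1)^m=(q^{\alpha}-1)^m(q^{\alpha}+1)^m$ has lowest $(q^{\alpha}-1)$-order exactly $m$, this forces $\Val_{q^{\alpha}-1}(\overline{F_{\infty}(\zeta_r,q^{\alpha},\mathcal{K})-ADO_r(q^{\alpha},\mathcal{K})})\ge r$, which tends to $+\infty$ as $r\to+\infty$.

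The main obstacle is the bookkeeping in the second step of part~1: the $c$-expansion is infinitely supported in $n$ and not unique, whereas the $d$-expansion is finite and unique, so one must justify that reduction modulo $r$ genuinely collapses the former onto the latter without any feedback into lower powers of $(\zeta_r-1)$. This is exactly what the identity $(\zeta_r-1)^{\varphi(r)}=r\,u$ guarantees---the high powers of $(\zeta_r-1)$ are literally zero in $\Z[\zeta_r]/r\Z[\zeta_r]$, not merely small---and combined with the $\Z/r\Z$-basis property it makes the coefficient comparison legitimate. A secondary point to treat carefully is the passage from the $(q^{2\alpha}-1)$-adic to the $(q^{\alpha}-1)$-adic valuation in part~2, which is harmless because the factor $q^{\alpha}+1$ contributes only to strictly higher order.
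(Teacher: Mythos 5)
Your argument is correct and is essentially the paper's own proof: both rest on the identity $c_{n,m}=\sum_{k=0}^{m}\tilde{\lambda}_k(r,\mathcal{K})\,b_{n,m-k}(\mathcal{K})$ together with $\tilde{\lambda}_0=A_{\mathcal{K}}(1)=1$ and $\tilde{\lambda}_k(r,\mathcal{K})\in r\Z$ for $0<k<r$, and then deduce part~2 by observing that all coefficients of $(q^{2\alpha}-1)^m$ with $m<r$ cancel in the difference. The only difference is presentational: you spell out the identification $\overline{d_{n,m}}=\overline{c_{n,m}}$ (collapse of the powers $(\zeta_r-1)^n$ with $n\ge\varphi(r)$ modulo $r$, plus the basis property of $1,\zeta_r-1,\dots,(\zeta_r-1)^{\varphi(r)-1}$), which the paper leaves implicit in the displayed equality preceding Proposition~\ref{thm_ADO_Vassiliev_r}.
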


\begin{proof}
The first point comes from the fact that \[A_{\mathcal{K}} (q^{2r\alpha}) \times F_{\infty}(q, q^{\alpha}, \mathcal{K}) = \sum_{m,n \geq 0} (\sum_{k=0}^m \tilde{\lambda}_k (r,\mathcal{K}) b_{n,m-k}(\mathcal{K})) (q^2-1)^n (q^{2\alpha}-1)^m,\] hence for all $m \sinf r$  \[\overline{d_{n,m}(r,\mathcal{K})}= \overline{\sum_{k=0}^m \tilde{\lambda}_k (r,\mathcal{K}) b_{n,m-k}(\mathcal{K})}=\overline{ \tilde{\lambda}_0 (r,\mathcal{K})b_{n,m}(\mathcal{K})}\] since $\tilde{\lambda}_m (r,\mathcal{K}) \in r \Z$, $ \forall 0 \sinf j \sinf r$. Since $A_{\mathcal{K}}(1)=1$, then $\tilde{\lambda}_0(r,\mathcal{K})=1$.

The second is a direct application of the first, since $m \sinf r$,  $\overline{d_{n,m}(r,\mathcal{K})}=\overline{b_{n,m}(\mathcal{K})}$, \[\overline{F_{\infty}(\zeta_r,q^{\alpha},\mathcal{K})- ADO_r(q^{\alpha}, \mathcal{K})}) \in (q^{2\alpha}-1)^r (\Z[\zeta_{2r}]/(r)) [[q^{\alpha}-1]]. \]
\end{proof}

As a corollary of Proposition \ref{thm_asymptotic_ado}, we have that $\overline{d_{n,m}(r, \mathcal{K})}$ are not all trivial Vassiliev invariants.

\begin{cor}
If $b_{n,m}(\mathcal{K}) \neq 0$ then for any big enough $r=p^l$ power of a prime,  \[\overline{d_{n,m}(r,\mathcal{K})} \neq 0 \text{ in } \Z/r\Z. \]
\end{cor}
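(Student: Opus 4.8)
The plan is to read the statement off directly from the first assertion of Proposition \ref{thm_asymptotic_ado}, which in the stable range $m \sinf r$ identifies the reduction $\overline{d_{n,m}(r,\mathcal{K})}$ with the reduction of the fixed integer $b_{n,m}(\mathcal{K})$. So I would fix $\mathcal{K}$ together with the indices $n,m$, write $b := b_{n,m}(\mathcal{K}) \in \Z$, and record the two features that make the argument work: $b$ is nonzero by hypothesis, and, crucially, $b$ does not depend on $r$ (unlike $d_{n,m}(r,\mathcal{K})$ and $\tilde{\lambda}_m(r,\mathcal{K})$).

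The core is then an elementary number-theoretic remark: a fixed nonzero integer $b$ is not annihilated by reduction modulo $r$ as soon as $r \ssup |b|$, since then $r \nmid b$ and hence $\overline{b} \neq 0$ in $\Z/r\Z$. I would therefore let $r = p^l$ range over the prime powers satisfying both $r \ssup m$ and $r \ssup |b|$. The first condition places us in the range $m \sinf r$ where Proposition \ref{thm_asymptotic_ado}(1) applies and yields $\overline{d_{n,m}(r,\mathcal{K})} = \overline{b}$ in $\Z/r\Z$, while the second forces $\overline{b} \neq 0$. Every such $r$ then satisfies $\overline{d_{n,m}(r,\mathcal{K})} \neq 0$, and since there are prime powers above any bound, this is exactly the asserted conclusion for all sufficiently large $r$.

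There is no genuine obstacle here: all the substance is already contained in Proposition \ref{thm_asymptotic_ado}(1), and the only additional input is the obvious fact that a fixed nonzero integer cannot be killed by reduction modulo arbitrarily large moduli. The sole point worth articulating explicitly is the uniformity, namely that the two lower bounds $r \ssup m$ and $r \ssup |b|$ can be met simultaneously, which is immediate from the existence of arbitrarily large prime powers.
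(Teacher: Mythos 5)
Your argument is correct and is exactly the one the paper intends: the corollary is stated as an immediate consequence of Proposition \ref{thm_asymptotic_ado}(1), and your two conditions $r \ssup m$ and $r \ssup |b_{n,m}(\mathcal{K})|$ make explicit the "big enough" that the paper leaves implicit. No difference in approach, and no gap.
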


\bibliographystyle{abbrv}
\bibliography{finite_type_ADO.bib}
\address
\end{document}